\newtheorem{theorem}{Theorem}[section]
\newtheorem{lemma}{Lemma}[section]
\newtheorem{proposition}{Proposition}[section]
\newtheorem{corollary}{Corollary}[section]
\theoremstyle{definition}
\newtheorem{definition}[theorem]{Definition}
\newtheorem{example}[theorem]{Example}
\newtheorem{remark}[theorem]{Remark}
\newcommand{\m}{\mathfrak}
\begin{document}

\pagestyle{plain}
\title{\bf ON REALIZABILITY OF GAUSS DIAGRAMS}

\maketitle

\begin{center}
Andrey Grinblat\footnote{\texttt{expandrey@mail.ru}} and Viktor Lopatkin\footnote{\texttt{wickktor@gmail.com} (please use this e-mail for contacting.)}
\end{center}

\begin{abstract}
  The problem of which Gauss diagram can be realized by knots is an old one and has been solved in several ways. In this paper, we present a direct approach to this problem. We show that the needed conditions for realizability of a Gauss diagram can be interpreted as follows ``the number of exits = the number of entrances'' and the sufficient condition is based on Jordan curve Theorem.

  \medskip

 \textbf{Mathematics Subject Classifications}: 57M25, 14H50.

\textbf{Key words}: Gauss diagrams; Gauss code; realizability; plane curves.
  \end{abstract}

\section*{Introduction}
In the earliest time of the Knot Theory C.F. Gauss defined the chord diagram (= Gauss diagram). Gauss \cite{Gauss} observed that if a chord diagram can be realized by a plane curve, then every chord is crossed only by an even number of chords, but that this condition is not sufficient.

The aim of this paper is to present a direct approach to the problem of which Gauss diagram can be realized by knots. This problem is an old one, and has been solved in several ways.

In 1936, M. Dehn \cite{D} found a sufficient algorithmic solution based on the existence of a touch Jordan curve which is the image of a transformation of the knot diagram by successive splits replacing all the crossings. A long time after in 1976, L. Lovasz and M.L. Marx \cite{LM} found a second necessary condition and finally during the same year, R.C. Read and P. Rosenstiehl \cite{RR} found the third condition which allowed the set of these three conditions to be sufficient. The last characterization is based on the tripartition of graphs into cycles, cocycles and bicycles.

In \cite{S1} the notation of oriented chord diagram was introduced and it was showed that these diagrams classify cellular generic curves on oriented surfaces. As a corollary a simple combinatorial classification of plane generic curves was derived, and the problem of realizability of these diagrams was also solved.

However all these ways are indirect; they rest upon deep and nontrivial auxiliary construction. There is a natural question: whether one can arrive at these conditions in a more direct and natural fashion?

We believe that the conditions for realizability of a Gauss diagram (by some plane curve) should be obtained in a natural manner; they should be deduced from an intrinsic structure of the curve.

In this paper, we suggest an approach, which satisfies the above principle. We use the fact that every Gauss diagram $\m{G}$ defines a (virtual) plane curve $\mathscr{C}(\m{G})$ (see \cite[Theorem 1.A]{GPV}), and the following simple ideas:
\begin{itemize}
  \item[(1)] For every chord of a Gauss diagram $\m{G}$, we can associate a closed path along the curve $\mathscr{C}(\m{G})$.
  \item[(2)] For every two non-intersecting chords of a Gauss diagram $\m{G}$, we can associate two closed paths along the curve $\mathscr{C}(\m{G})$ such that every chord crosses both of those chords correspondences to the point of intersection of the paths.
  \item[(3)] If a Gauss diagram $\m{G}$ is realizable (say by a plane curve $\mathscr{C}(\m{G})$), then for every closed path (say) $\mathscr{P}$ along $\mathscr{C}(\m{G})$ we can associate a coloring another part of $\mathscr{C}(\m{G})$ into two colors (roughly speaking we get ``inner'' and ``outer'' sides of $\mathscr{P}$ \textit{cf.} Jordan curve Theorem). If a Gauss diagram is not realizable then (\cite[Theorem 1.A]{GPV}) it defines a virtual plane curve $\mathscr{C}(\m{G})$. We shall show that there exists a closed path along $\mathscr{C}(\m{G})$ for which we cannot associate a well-defined coloring of $\mathscr{C}(\m{G})$, \textit{i.e.,} $\mathscr{C}(\m{G})$ contains a path is colored into two colors.
  \end{itemize}

Using these ideas we solve the problem of which Gauss diagram can be realized by knots.

\section{Preliminaries}
Recall that classically, a knot is defined as an embedding of the circle $S^1$ into $\mathbb{R}^3$, or equivalently into the $3$-sphere $S^3$, \textit{i.e}., a knot is a closed curve embedded on $\mathbb{R}^3$ (or $S^3$) without intersecting itself, up to ambient isotopy.

The projection of a knot onto a $2$-manifold is considered with all multiple points are transversal double with will be call {\it crossing points} (or shortly \textit{crossings}). Such a projection is called the {\it shadow} by the knots theorists \cite{A,S2}, following \cite{S1} we shall also call these projections as plane curves. {\it A knot diagram} is a generic immersion of a circle $S^1$ to a plane $\mathbb{R}^2$ enhanced by information on overpasses and underpasses at double points.

\subsection{Gauss Diagrams} A generic immersion of a circle to a plane is characterized by its Gauss diagram \cite{PV}.

%%%%Figure 1
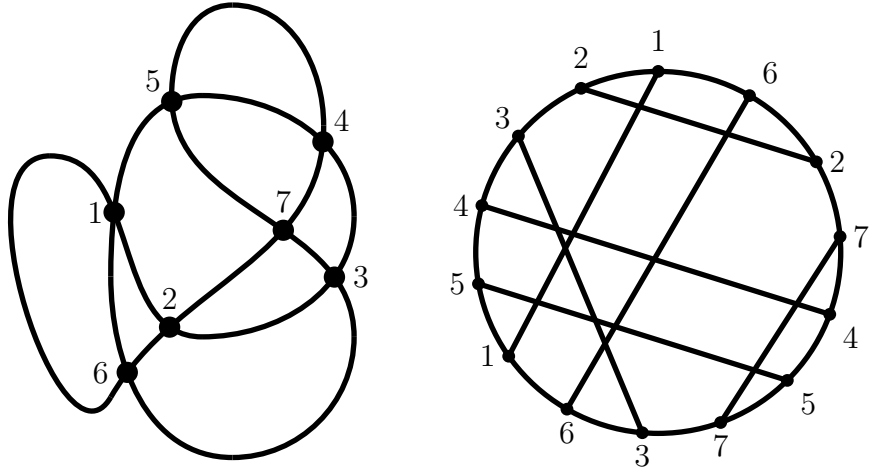
\begin{figure}[h!]
\begin{center}
\begin{tikzpicture}[scale = 4]
 \draw [line width = 2, name path= a] (0.1,0.8) to [out = 0, in = 180] (0.6,0.2);
 \draw[line width =2, name path= b](0.6,0.2) to [out = 0, in = 270] (1.1,0.6);
 \draw[line width =2, name path= c] (1.1,0.6)to [out = 90, in =0] (0.6, 1);
 \draw[line width =2, name path= d](0.6,1) to [out = 180, in = 90] (0.3,0.4);
 \draw[line width =2, name path= e] (0.3,0.4)to [out= 270, in = 180] (0.7,-0.2);
 \draw[line width =2, name path= f](0.7,-0.2)to [out= 0, in = 270] (1.1,0.2);
 \draw[line width =2, name path= g](1.1,0.2)to [out = 90, in =270] (0.5, 1);
 \draw[line width =2, name path= h] (0.5,1)to [out = 90, in = 180] (0.7, 1.3);
 \draw[line width =2, name path= k](0.7,1.3) to [out= 0, in = 90] (1,0.9);
 \draw[line width =2, name path= l](1,0.9)to [out= 270, in = 60] (0.3, 0);
 \draw[line width =2, name path= m](0.3,0) to [out = 240, in = 180] (0.1,0.8);
 %intersections
 \fill [name intersections={of=a and d, by={1}}]
(1) circle (1pt) node[left] {$1$};
 \fill [name intersections={of=a and l, by={2}}]
(2) circle (1pt) node[above =1mm of 2] {$2$};
 \fill [name intersections={of=b and g, by={3}}]
(3) circle (1pt) node[right = 1mm of 3] {$3$};
 \fill [name intersections={of=c and l, by={4}}]
(4) circle (1pt) node[above right] {$4$};
 \fill [name intersections={of=d and g, by={5}}]
(5) circle (1pt) node[above left] {$5$};
 \fill [name intersections={of=e and l, by={6}}]
(6) circle (1pt) node[left = 1mm of 6] {$6$};
 \fill [name intersections={of=g and l, by={7}}]
(7) circle (1pt) node[above = 1mm of 7] {$7$};
 \begin{scope}[scale = 0.3, xshift = 7cm, yshift = 1.6cm, line width=2]
   \draw (0,0) circle (2);
    \coordinate (1) at (90:2);
    \node at (1) [above =1mm of 1] {$1$};
    \fill (1) circle(2pt);
    \coordinate (2) at (115:2);
    \node at (2) [above =1mm of 2] {$2$};
    \fill (2) circle(2pt);
    \coordinate (3) at (140:2);
    \node at (3) [above left = -1mm of 3] {$3$};
    \fill (3) circle(2pt);
    \coordinate (4) at (165:2);
    \node at (4) [left] {$4$};
    \fill (4) circle(2pt);
    \coordinate (5) at (190:2);
    \node at (5) [left] {$5$};
    \fill (5) circle(2pt);
    \coordinate (6) at (215:2);
    \node at (6) [left] {$1$};
    \fill (6) circle(2pt);
    \coordinate (7) at (240:2);
    \node at (7) [below] {$6$};
    \fill (7) circle(2pt);
    \coordinate (8) at (265:2);
    \node at (8) [below] {$3$};
    \fill (8) circle(2pt);
    \coordinate (9) at (290:2);
    \node at (9) [below] {$7$};
    \fill (9) circle(2pt);
    \coordinate (10) at (315:2);
    \node at (10) [below right] {$5$};
    \fill (10) circle(2pt);
    \coordinate (11) at (340:2);
    \node at (11) [below right] {$4$};
    \fill (11) circle(2pt);
    \coordinate (12) at (5:2);
    \node at (12) [right] {$7$};
    \fill (12) circle(2pt);
    \coordinate (13) at (30:2);
    \node at (13) [right] {$2$};
    \fill (13) circle(2pt);
    \coordinate (14) at (60:2);
    \node at (14) [above right] {$6$};
    \fill (14) circle(2pt);
    %chords
    \draw[line width =2] (1) -- (6);
    \draw[line width =2] (3) -- (8);
    \draw[line width =2] (5) -- (10);
    \draw[line width =2] (4) -- (11);
    \draw[line width =2] (9) -- (12);
    \draw[line width =2] (2) -- (13);
    \draw[line width =2] (7) -- (14);
   \end{scope}
\end{tikzpicture}
\end{center}
\caption{The plane curve and its Gauss diagram are shown.}\label{exofgauss}
\end{figure}

\begin{definition}
 {\it The Gauss diagram} is the immersing circle with the preimages of each double point connected with a chord.
\end{definition}

On the other words, this natation can be defined as follows. Let us walk on a path along the plane curve until returning back to the origin and then generate a word $W$ which is the sequence of the crossings in the order we meet them on the path. $W$ is a double occurrence word. If we put the labels of the crossing on a circle in the order of the word $W$ and if we join by a chord all pairs of identical labels then we obtain a chord diagram (=Gauss diagram) of the plane curve (see {\sc Figure} \ref{exofgauss}).

{\it A virtual knot diagram} \cite{GPV} is a generic immersion of the circle into the plane, with
double points divided into real crossing points and virtual crossing points, with the
real crossing points enhanced by information on overpasses and underpasses (as for
classical knot diagrams). At a virtual crossing the branches are not divided into an
overpass and an underpass. The Gauss diagram of a virtual knot is constructed in
the same way as for a classical knot, but all virtual crossings are disregarded.

\begin{theorem}{\cite[Theorem 1.A]{GPV}}\label{virt}
  A Gauss diagram defines a virtual knot diagram up to virtual moves.
\end{theorem}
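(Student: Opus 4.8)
The plan is to prove the statement constructively, in two stages: first, to exhibit for an arbitrary Gauss diagram $\m{G}$ a virtual knot diagram whose chord diagram is exactly $\m{G}$ (existence of a realization), and second, to show that any two diagrams produced this way are related by a sequence of purely virtual moves (well-definedness up to virtual moves). The double occurrence word $W$ attached to $\m{G}$ will serve as the combinatorial input throughout.

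For existence, I would first read off from $\m{G}$ the abstract four-valent graph $\Gamma$ having one vertex for each chord and one edge for each arc of the circle cut out by consecutive endpoints of $W$; the core circle traverses $\Gamma$ as a single closed walk, and at each vertex $\m{G}$ records which two of the four incident edge-ends belong to opposite branches. I would then immerse $\Gamma$ generically into the plane: place the vertices at distinct points, realize the edges by arcs in general position so that images meet only in transversal double points, and arrange the local picture at each vertex to be a transversal crossing of the two branches. Declaring the double points at the vertices to be real crossings --- decorated as over/under by the sign carried by the corresponding chord --- and every other, accidental, double point to be a virtual crossing produces a generic immersion of $S^1$, i.e. a virtual knot diagram, whose Gauss diagram is $\m{G}$ by construction.

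For well-definedness, the point is that every choice made above --- the positions of the vertices, the routing of the edges, and the order and location of the accidental intersections --- affects only the virtual part of the diagram. Any two arcs with the same endpoints differ by a homotopy through immersions that meets the rest of the curve only in virtual crossings, and this is precisely what the \emph{detour move} permits: an arc meeting the remainder of the diagram only in virtual crossings may be replaced by any other arc with the same endpoints again meeting it only virtually. Using the detour move to reroute each edge, together with the virtual Reidemeister moves V1--V3 to reorganize accidental crossings among themselves, I would connect any two realizations of $\m{G}$ by virtual moves alone.

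The existence step is soft; I expect the main obstacle to lie in the well-definedness step, where one must confirm that the detour move really does connect all admissible routings. The delicate points are (i) that the choice of branch passing over at an accidental intersection and the cyclic order in which an edge crosses the others are inessential modulo V2, V3 and the detour move, and (ii) that moving an edge never creates or destroys a real crossing, so that the real crossings stay rigidly pinned to the vertices dictated by $\m{G}$ while everything else remains fluid. Establishing (ii) carefully is the crux, since it is exactly the separation between the invariant content of $\m{G}$ and the arbitrary planar embedding, and it is here that the flexibility of virtual crossings encoded in the detour move does the essential work.
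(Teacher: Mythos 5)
There is nothing in the paper to compare your proof against: the statement is imported verbatim from Goussarov--Polyak--Viro \cite[Theorem 1.A]{GPV} and used as a black box, so the paper contains no proof of it at all. What you have written is, in outline, the standard argument from the virtual knot theory literature (essentially the proof in \cite{GPV}, and Kauffman's detour-move lemma): realize the chords as real crossings, join them by generic arcs in the plane in the cyclic order prescribed by the double occurrence word, declare every accidental intersection virtual, and then use the detour move for uniqueness up to virtual moves. Both halves of your plan are sound, and your existence step is complete as stated. One caveat in the well-definedness step deserves attention: the virtual Reidemeister moves V1--V3 alone do \emph{not} generate the detour move. To push a rerouted arc across a \emph{real} crossing of the remaining diagram you also need the mixed (semi-virtual) move, in which a strand carrying only virtual crossings slides past a real crossing; this move is part of the set of ``virtual moves'' in \cite{GPV}, and without it your point (i) fails, since an arc cannot in general be cleared off a real crossing using V1--V3 only. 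With the mixed move included, the detour lemma is standard and your argument closes. A final cosmetic remark: in this paper the Gauss diagrams are unsigned (they encode shadows of curves rather than knots), so the over/under decoration you attach to the real crossings should either be dropped or treated as an arbitrary choice; nothing else in your construction changes.
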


Arguing similarly as in the real knot case, one can define {\it a shadow of the virtual knot} (see {\sc Figure} \ref{virt1}).
%%%%%FIGURE 2
\begin{figure}[h!]
\begin{tikzpicture}[scale =0.9]
  \draw [line width = 2, name path= a] (0,0) to [out = 300, in = 180] (2.2,-1) to [out = 0, in= 270] (5,0.5);
  \draw[line width =2,name path =b] (5,0.5) to [out = 90, in = 10] (2,1.5) to [out = 190, in = 60] (0,0);
  \draw[line width =2, name path =c1] (0,0) to [out = 240, in = 160] (1,-2);
  \draw[line width =2, name path =c2] (1,-2) to [out = 340, in = 300] (2.7, 0.5) to [out = 120, in=60] (1.2, 0);
  \draw[line width =2, name path = d] (1.2, 0) to [out = 240, in = 180] (1.5,-3);
  \draw[line width =2, name path =e] (1.5, -3) to [out = 0, in = 290] (4.5,0) to [out = 110, in =0] (1.5,2.3);
  \draw[line width =2] (1.5,2.3) to [out =180, in = 120] (0,0);
  %intersections
    \fill (0,0) circle (4pt) node[left] {$1$};
    \draw [name intersections={of=a and d, by={x}}]
(x) circle (5pt) node[above right] {$x$};
    \fill[name intersections = {of =a and c2, by ={2}}]
(2) circle (4pt) node[below right] {$2$};
    \fill[name intersections ={of =a and e, by ={3}}]
(3) circle (4pt) node[below right] {$3$};
    \draw [name intersections={of=b and e, by={y}}]
(y) circle (5pt) node[above right] {$y$};
\fill[name intersections ={of =c1 and d, by ={4}}]
(4) circle (4pt) node[below left] {$4$};
%-------------------------------------------------------------
%diagram
\begin{scope}[xshift = -4.8cm, scale =-0.8]
\draw[line width =2] (0,0) circle (3);
\draw[line width =1] (270:3)--(45:3);
\draw[line width =1] (315:3)--(135:3);
\draw[line width =1] (0:3)--(225:3);
\draw[line width =1] (90:3)--(180:3);
\fill(270:3) circle(3pt) node[above] {$1$};
\fill(315:3) circle(3pt) node[above left] {$2$};
\fill(0:3) circle(3pt) node[left] {$3$};
\fill(45:3) circle(3pt) node[below left] {$1$};
\fill(90:3) circle(3pt) node[below] {$4$};
\fill(135:3) circle(3pt) node[below right] {$2$};
\fill(180:3) circle(3pt) node[right] {$4$};
\fill(225:3) circle(3pt) node[above right] {$3$};
\end{scope}
\end{tikzpicture}
\caption{The chord diagram and the shadow of the virtual knot are shown. Here $x$ and $y$ are the virtual crossing points.}\label{virt1}
\end{figure}
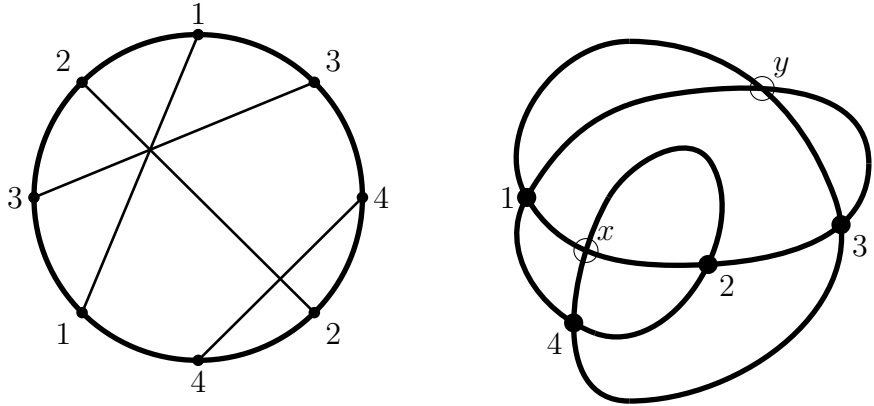

\subsection{Conway's Smoothing}
We frequently use the following notations. Let $K$ be a knot, $\mathscr{C}$ its shadow and $\m{G}$ the Gauss diagram of $\mathscr{C}$. For every crossing $c$ of $\mathscr{C}$ we denote by $\m{c}$ the corresponding chord of $\m{G}$.

If a Gauss diagram $\m{G}$ contains a chord $\m{c}$ then we write $\m{c}\in\m{G}$. We denote by $\m{c}_0$, $\m{c}_1$ the endpoints of every chord $\m{c}\in \m{G}$. We shall also consider every chord $\m{c} \in \m{G}$ together with one of two arcs are between its endpoints, and a chosen arc is denoted by $\m{c}_0\m{c}_1$.

Further, $\m{c}_\times$ denotes the set of all chords cross the chord $\m{c}$ and $\m{c}_\parallel$ denotes the set of all chords do not cross the chord $\m{c}$. We put $\m{c} \not \in \m{c}_\times$, and $\m{c} \in \m{c}_\parallel$.

\textbf{Throughout this paper we consider Gauss diagrams such that $\m{c}_\times \ne \varnothing$ for every $\m{c \in G}$.}

As well known, John Conway introduced a ``surgical'' operation on knots, called \textit{smoothing}, consists in eliminating the crossing by interchanging the strands ({\sc Figure} \ref{Conway}).

%%% FIGURE 3
\begin{figure}[h!]
\begin{tikzpicture}[scale =1]
    \draw[->, line width =3] (1,0) -- (0,1);
    \draw[line width = 9,white] (0,0) -- (1,1);
    \draw[->, line width = 3] (0,0) -- (1,1);
    \draw[->,line width = 2] (1.3,0.5) -- (2.3,0.5);
    \draw[line width =3] (2.6, 0) to [out = 30, in = 270] (3,0.5);
    \draw[->,line width =3] (3,0.5) to [out =90,in = 330] (2.6,1);
    \draw[line width =3] (3.6,0) to [out = 150, in = 270] (3.2,0.5);
    \draw[->,line width =3] (3.2, 0.5) to [out = 90, in = 210] (3.6,1);
%======================================================================
   \draw[->,line width =3] (0,-2)--(1,-1);
   \draw[->, line width =9,white] (1,-2) -- (0,-1);
   \draw[->, line width =3] (1,-2) -- (0,-1);
   \draw[->,line width = 2] (1.3,-1.5) -- (2.3,-1.5);
   \draw[line width=3] (2.6,-1) to [out = 300, in = 180] (3.1,-1.4);
   \draw[->,line width =3] (3.1, -1.4) to [out = 0, in = 240](3.6,-1);
   \draw[line width =3] (2.6, -2) to [out = 60, in =180](3.1,-1.6);
   \draw[->,line width =3] (3.1,-1.6) to [out = 0, in = 130] (3.6,-2);
  \end{tikzpicture}
    \caption{The Conway smoothing the crossings are shown.}\label{Conway}
\end{figure}
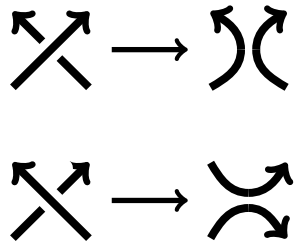

We aim to specialize a Conway smoothing a crossing of a plane curve to an operation on chords of the corresponding Gauss diagram.

Let $K$ be a knot, $\mathscr{C}$ its shadow, and $\m{G}$ the Gauss diagram of $\mathscr{C}$. Take a crossing point $c$ of $\mathscr{C}$ and let $D_c$ be a small disk centered at $c$ such that $D_c\cap \mathscr{C}$ does not contain another crossings of $\mathscr{C}$. Denote by $\partial D_c$ the boundary of $D_c$. Starting from $c$, let us walk on a path along the curve $\mathscr{C}$ until returning back to $c$. Denote this path by $\mathscr{L}_c$ and let $c c_a^lc_z^lc$ be the sequence of the points in the order we meet them on $\mathscr{L}_c$, where $ \{c_a^l,c_z^l\} = \mathscr{L}_c \cap \partial D_c$. After returning back to $c$ let us keep walking along the curve $\mathscr{C}$ in the same direction as before until returning back to $c$. Denote the corresponding path by $\mathscr{R}_c$ and let $cc_a^rc_z^rc$ be the sequence of the points in the order we meet them on $\mathscr{R}_c$, where $\{c_a^r, c_z^r\} = \partial D_c \cap \mathscr{R}_c$.

%% FIGURE 4
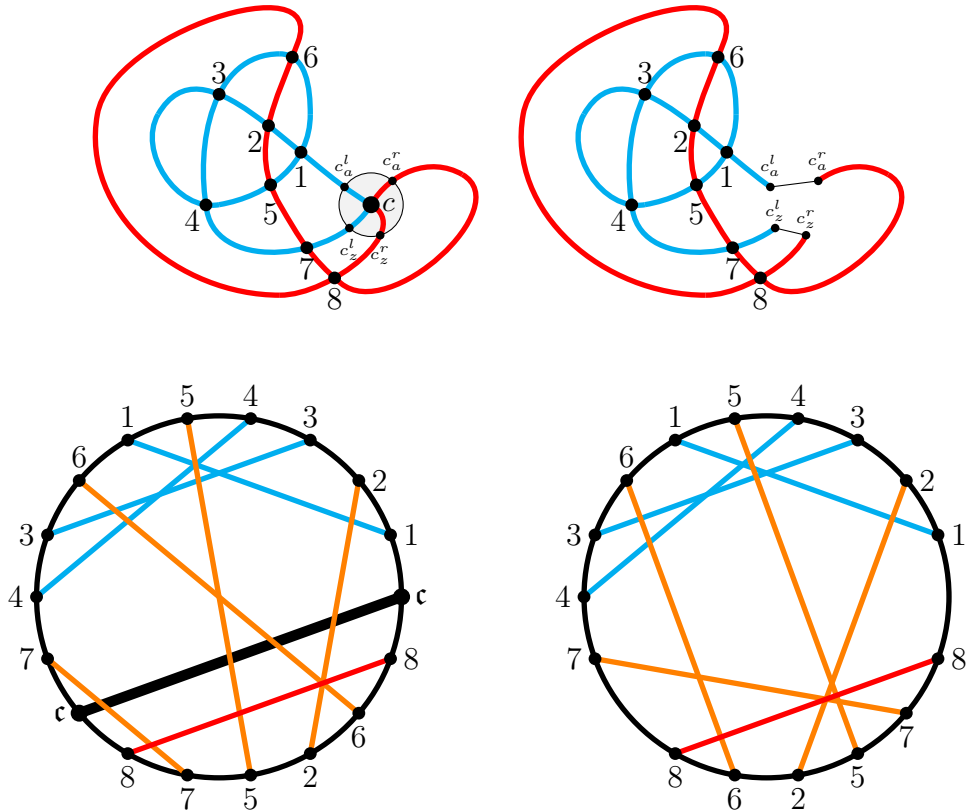
\begin{figure}[h!]
\begin{center}
  \begin{tikzpicture}[scale=0.8]
   \fill[lightgray!20] (0.5,-0.5) circle(15pt);
   %green
   \draw [line width = 2, name path= a, cyan] (0.5,-0.5) to [out = 150, in = 60] (-3,1);
   \draw [line width = 2, name path= a1, cyan](-3,1) to [out = 240, in =190] (-2,-0.5);
   \draw [line width = 2, name path= a2, cyan] (-2,-0.5) to [out=10, in = 270] (-0.5,1);
   \draw [line width = 2, name path= a3, cyan](-0.5,1) to [out = 90, in = 0] (-1,2);
   \draw [line width = 2, name path= a4, cyan](-1,2) to [out = 180, in = 100]
   (-2.2,-0.6) to [out = 280, in = 240] (0.5,-0.5);
   %red
   \draw[line width=2, name path =b, red] (0.5,-0.5) to [out = 60, in = 150] (2,0);
   \draw[line width=2, name path =b1, red] (2,0) to [out = 330, in = 320] (0,-1.8) to
   [out = 140, in=300] (-1,-0.5) to [out = 120, in = 250] (-0.7, 2.2) to [out=70, in = 80]
   (-4,1) to [out =260, in = 180] (-1,-2);
   \draw [line width = 2, name path= b2, red](-1,-2) to [out=0, in =330] (0.5,-0.5);
   \fill (0.5,-0.5) circle (4pt);
   \draw[name path = circ] (0.5,-0.5) circle (15pt);
   \draw (0.5,-0.5) node [right] {\small $c$};
   %intersection with circle
   \fill [name intersections={of=circ and a, by={xla}}]
   (xla) circle (2pt) node[above,black] {\tiny$c^l_a$};
   \fill [name intersections={of=circ and a4, by={xlz}}]
   (xlz) circle (2pt) node[below,black] {\tiny $c^l_z$};
   \fill [name intersections={of=circ and b, by={xra}}]
   (xra) circle (2pt) node[above,black] {\tiny $c^r_a$};
   \fill [name intersections={of=circ and b2, by={xrz}}]
   (xrz) circle (2pt) node[below] {\tiny $c^r_z$};
   %intersection of curves
   \fill [name intersections={of=a and a2, by={1}}]
   (1) circle (3pt) node[below =0.5mm of 1,black] {$1$};
   \fill [name intersections={of= a and b1, by={2}}]
   (2) circle (3pt) node[below left = -1mm of 2] {$2$};
   \fill [name intersections={of= a and a4, by={3}}]
   (3) circle (3pt) node[above] {$3$};
   \fill [name intersections={of= a1 and a4, by={4}}]
   (4) circle (3pt) node[below left = -1mm of 4] {$4$};
   \fill [name intersections={of= a2 and b1, by={5}}]
   (5) circle (3pt) node[below = 1mm of 5] {$5$};
   \fill [name intersections={of= a3 and b1, by={6}}]
   (6) circle (3pt) node[right ] {$6$};
   \fill [name intersections={of= a4 and b1, by={7}}]
   (7) circle (3pt) node[below] {$7$};
   \fill [name intersections={of= b1 and b2, by={8}}]
   (8) circle (3pt) node[below] {$8$};
 \begin{scope}[xshift=7cm]
   %green
   \draw [line width = 2, name path= a, cyan] (0.5,-0.5) to [out = 150, in = 60] (-3,1);
   \draw [line width = 2, name path= a1, cyan](-3,1) to [out = 240, in =190] (-2,-0.5);
   \draw [line width = 2, name path= a2, cyan] (-2,-0.5) to [out=10, in = 270] (-0.5,1);
   \draw [line width = 2, name path= a3, cyan](-0.5,1) to [out = 90, in = 0] (-1,2);
   \draw [line width = 2, name path= a4, cyan](-1,2) to [out = 180, in = 100]
   (-2.2,-0.6) to [out = 280, in = 240] (0.5,-0.5);
   %red
   \draw[line width=2, name path =b, red] (0.5,-0.5) to [out = 60, in = 150] (2,0);
   \draw[line width=2, name path =b1, red] (2,0) to [out = 330, in = 320] (0,-1.8) to
[out = 140, in=300] (-1,-0.5) to [out = 120, in = 250] (-0.7, 2.2) to [out=70, in = 80]
(-4,1) to [out =260, in = 180] (-1,-2);
   \draw [line width = 2, name path= b2, red](-1,-2) to [out=0, in =330] (0.5,-0.5);
   \fill (0.5,-0.5) circle (4pt);
   \fill[name path = circ, white] (0.5,-0.5) circle (15pt);
      %intersection with circle
   \fill [name intersections={of=circ and a, by={xla}}]
   (xla) circle (2pt) node[above,black] {\tiny$c^l_a$};
   \fill [name intersections={of=circ and a4, by={xlz}}]
   (xlz) circle (2pt) node[above =-0.5mm of xlz] {\tiny $c^l_z$};
   \fill [name intersections={of=circ and b, by={xra}}]
   (xra) circle (2pt) node[above] {\tiny $c^r_a$};
   \fill [name intersections={of=circ and b2, by={xrz}}]
   (xrz) circle (2pt) node[above = -0.5mm of xrz] {\tiny $c^r_z$};
   %intersection of curves
   \fill [name intersections={of=a and a2, by={1}}]
   (1) circle (3pt) node[below =0.5mm of 1,black] {$1$};
   \fill [name intersections={of= a and b1, by={2}}]
   (2) circle (3pt) node[below left = -1mm of 2] {$2$};
   \fill [name intersections={of= a and a4, by={3}}]
   (3) circle (3pt) node[above] {$3$};
   \fill [name intersections={of= a1 and a4, by={4}}]
   (4) circle (3pt) node[below left = -1mm of 4] {$4$};
   \fill [name intersections={of= a2 and b1, by={5}}]
   (5) circle (3pt) node[below = 1mm of 5] {$5$};
   \fill [name intersections={of= a3 and b1, by={6}}]
   (6) circle (3pt) node[right ] {$6$};
   \fill [name intersections={of= a4 and b1, by={7}}]
   (7) circle (3pt) node[below] {$7$};
   \fill [name intersections={of= b1 and b2, by={8}}]
   (8) circle (3pt) node[below] {$8$};
   %added
   \draw (xrz) to  (xlz);
   \draw (xla) to (xra);
 \end{scope}
 \begin{scope} [yshift = -7cm, xshift = -2cm, line width=2]
   \draw (0,0) circle (3);
   \draw[cyan] (120:3) -- (20:3);
   \draw[cyan] (60:3) -- (160:3);
   \draw[cyan] (80:3) -- (180:3);
   \draw[line width=4] (0:3) -- (220:3);
   \draw[orange] (200:3)--(260:3);
   \draw[orange] (100:3)--(280:3);
   \draw[orange] (40:3)--(300:3);
   \draw[orange] (140:3)--(320:3);
   \draw[red] (240:3) -- (340:3);
   \fill (0:3) circle (4pt) node[right] {$\m{c}$};
   \fill (20:3) circle (3pt) node[right] {$1$};
   \fill (40:3) circle (3pt) node[right] {$2$};
   \fill (60:3) circle (3pt) node[above] {$3$};
   \fill (80:3) circle (3pt) node[above,black] {$4$};
   \fill (100:3) circle (3pt) node[above,black] {$5$};
   \fill (120:3) circle (3pt) node[above] {$1$};
   \fill (140:3) circle (3pt) node[above] {$6$};
   \fill (160:3) circle (3pt) node[left] {$3$};
   \fill (180:3) circle (3pt) node[left] {$4$};
   \fill (200:3) circle (3pt) node[left] {$7$};
   \fill (220:3) circle (4pt) node[left] {$\m{c}$};
   \fill (240:3) circle (3pt) node[below] {$8$};
   \fill (260:3) circle (3pt) node[below] {$7$};
   \fill (280:3) circle (3pt) node[below] {$5$};
   \fill (300:3) circle (3pt) node[below] {$2$};
   \fill (320:3) circle (3pt) node[below] {$6$};
   \fill (340:3) circle (3pt) node[right] {$8$};
   \end{scope}
   \begin{scope} [yshift = -7cm, xshift = 7cm, line width=2]
   \draw (0,0) circle (3);
   \draw[cyan] (120:3) -- (20:3);
   \draw[cyan] (60:3) -- (160:3);
   \draw[cyan] (80:3) -- (180:3);
   \draw[orange] (200:3)--(320:3);
   \draw[orange] (100:3)--(300:3);
   \draw[orange] (40:3)--(280:3);
   \draw[orange] (140:3)--(260:3);
   \draw[red] (240:3) -- (340:3);
   \fill (20:3) circle (3pt) node[right] {$1$};
   \fill (40:3) circle (3pt) node[right] {$2$};
   \fill (60:3) circle (3pt) node[above] {$3$};
   \fill (80:3) circle (3pt) node[above,black] {$4$};
   \fill (100:3) circle (3pt) node[above,black] {$5$};
   \fill (120:3) circle (3pt) node[above] {$1$};
   \fill (140:3) circle (3pt) node[above] {$6$};
   \fill (160:3) circle (3pt) node[left] {$3$};
   \fill (180:3) circle (3pt) node[left] {$4$};
   \fill (200:3) circle (3pt) node[left] {$7$};
   \fill (240:3) circle (3pt) node[below] {$8$};
   \fill (260:3) circle (3pt) node[below] {$6$};
   \fill (280:3) circle (3pt) node[below] {$2$};
   \fill (300:3) circle (3pt) node[below] {$5$};
   \fill (320:3) circle (3pt) node[below] {$7$};
   \fill (340:3) circle (3pt) node[right] {$8$};
   \end{scope}
\end{tikzpicture}
\end{center}
\caption{The Conway smoothing the crossing $c$ and the chord $\m{c}$ is shown.}\label{del}
\end{figure}

Next, let us delete the inner side of $D_c \cap \mathscr{C}$ and attach $c_l^a$ to $c_r^a$, and $c^r_z$ to $c^l_z$. We thus get the new plane curve $\widehat{\mathscr{C}}_c$ (see {\sc Figure} \ref{del}). It is easy to see that this curve is the shadow of the knot, which is obtained from $K$ by Conway's smoothing the crossing $c$. Let $\widehat{\m{G}}_\m{c}$ be the Gauss diagram of $\widehat{\mathscr{C}}_c$. We shall say that \textit{the Gauss diagram $\widehat{\m{G}}_\m{c}$ is obtained from the Gauss diagram $\m{G}$ by Conway's smoothing the chord $\m{c}$.}

As an immediate consequence of the preceding discussion, we get the following proposition.

\begin{proposition}\label{RemConway}
 Let $\m{G}$ be a Gauss diagram and $\m{c}$ be its arbitrary chord. Then $\widehat{\m{G}}_{\m{c}}$ is obtained from $\m{G}$ as follows: (1) delete the chord $\m{c}$, (2) if two chords $\m{a}, \m{b} \in \m{c}_\times$ intersect (\textit{resp.} do not intersected) in $\m{G}$ then they do not intersect in $\m{\widehat{G}_c}$ (\textit{resp.} intersected), (3) another chords keep their positions.
\end{proposition}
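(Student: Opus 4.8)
The plan is to convert the geometric smoothing of Figure~\ref{del} into a statement about the double occurrence word of $\mathscr{C}$, and then to obtain (1)--(3) by comparing the interlacement (``which chord crosses which'') of $\m{G}$ and $\widehat{\m{G}}_\m{c}$, since this interlacement is exactly the data carried by a Gauss diagram.

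First I would pin down the local effect of the smoothing on the traversal. Writing the closed walk as the cyclic word $c\,c_a^l\,[\mathscr{L}_c]\,c_z^l\,c\,c_a^r\,[\mathscr{R}_c]\,c_z^r\,c$, the two branches inside $D_c$ join $c_z^r$ to $c_a^l$ and $c_z^l$ to $c_a^r$. After removing $D_c\cap\mathscr{C}$ and reconnecting $c_a^l$ to $c_a^r$ and $c_z^l$ to $c_z^r$, tracing the new curve shows that $\widehat{\mathscr{C}}_c$ runs along $\mathscr{L}_c$ in its original direction and then along $\mathscr{R}_c$ in the \emph{reverse} direction. Thus the combinatorial content of Conway's smoothing is: in the double occurrence word of $\widehat{\mathscr{C}}_c$ the two letters $\m{c}$ vanish, the block of letters coming from $\mathscr{L}_c$ is left intact, and the block coming from $\mathscr{R}_c$ is read backwards.

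Next I would partition the chords. The endpoints $\m{c}_0,\m{c}_1$ split the circle into an arc $A_L$ carrying the $\mathscr{L}_c$-letters and an arc $A_R$ carrying the $\mathscr{R}_c$-letters, and a chord $\m{d}\neq\m{c}$ lies in $\m{c}_\times$ exactly when it has one endpoint in each arc. Assertion (1) is immediate, as the crossing $c$ is removed. For two chords with both endpoints in $A_L$ the order of letters is unchanged, so their interlacement is preserved; for two chords with both endpoints in $A_R$ the block is merely reversed, and since reversing a linearly ordered set of endpoints preserves both the interleaved and the nested configuration, interlacement is again preserved. A chord contained in one arc, compared with a chord of $\m{c}_\times$, meets the latter only through a single endpoint lying in that arc (its other endpoint sitting on the far side in both diagrams), so their crossing is decided purely by a betweenness relation inside the arc, which reversal does not disturb. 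These cases establish (3).

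The heart of the matter, and assertion (2), is the comparison of two chords $\m{a},\m{b}\in\m{c}_\times$. Writing $\m{a}=\{\ell_i,r_j\}$ and $\m{b}=\{\ell_{i'},r_{j'}\}$ with the $\ell$'s in $A_L$ and the $r$'s in $A_R$, a direct check of the cyclic order shows that in $\m{G}$ they cross iff the $A_L$-order and the $A_R$-order of the two chords agree, i.e.\ iff $(i-i')(j-j')>0$. Reversing the $A_R$-block turns this condition into $(i-i')(j-j')<0$, so $\m{a}$ and $\m{b}$ cross in $\widehat{\m{G}}_\m{c}$ precisely when they do not cross in $\m{G}$, which is (2). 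I expect the main obstacle to be the first step: correctly matching the four boundary points $c_a^l,c_z^l,c_a^r,c_z^r$ with their positions in the word and verifying that the reconnection reverses exactly one of the two loops. Once the ``reverse one block'' description is secured, the sign bookkeeping in the remaining cases is routine, and I note it is insensitive to whether one reverses $\mathscr{L}_c$ or $\mathscr{R}_c$, since the two choices differ by a global reversal of the whole word and hence leave the interlacement unchanged.
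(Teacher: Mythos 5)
Your proposal is correct and takes essentially the same route as the paper: the paper's proof writes the double occurrence word as $W = W_1 c W_2 c W_3$ and observes that the smoothing produces $\widehat{W}_{c} = W_1 W_2^R W_3$, which is exactly your ``reverse one block'' description (up to the global reversal you correctly note is immaterial). The only difference is that you explicitly carry out the interlacement case analysis deducing (1)--(3) from the reversed block, a verification the paper compresses into ``and the statement follows.''
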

\begin{proof}
  Indeed, let $W$ be the word which is the sequence of the crossings in the order we meet them on the curve $\mathscr{C}$. Since $\m{c}_\times \ne \varnothing$, $W$ can be written as follows $W = W_1 c W_2 c W_3$, where $W_1,W_2,W_3$ are subwords of $W$ and at least one of $W_1,W_3$ is not empty. Define $W_2^R$ as the reversal of the word $W_2$. Then, from the preceding discussion, the word $\widehat{W}_c: = W_1 W_2^R W_3$ gives $\widehat{\m{G}}_{\m{c}}$ (see {\sc Figure} \ref{del}) and the statement follows.
\end{proof}

\section{Partitions of Gauss Diagrams}
In this section we introduce notations, whose importance will become clear as we proceed.

\begin{definition}
  Let $\m{G}$ be a Gauss diagram and $\m{a}$ a chord of $\m{G}$. \textit{A $C$-contour}, denoted $C(\m{a})$, consists of the chord $\m{a}$, a chosen arc $\m{a}_0\m{a}_1$, and all chords of $\m{G}$ such that all their endpoints lie on the arc $\m{a}_0\m{a}_1$. We call a chord from the set $\m{a}_\times$ \textit{the door chord of the $C$-contour $C(\m{a})$}.
\end{definition}

Let us consider a plane curve $\mathscr{C}:S^1 \to \mathbb{R}^2$ and let $\m{G}$ be its Gauss diagram. Every chord $\m{c \in G}$ correspondences to the crossing $c$ of $\mathscr{C}$. Thus for every $C$-contour $C(\m{c})$, we can associate a closed path $\mathscr{C}(c)$ along the curve $\mathscr{C}$. We call $\mathscr{C}(c)$ \textit{ the loop of the curve} $\mathscr{C}$. It is obviously that there is the one-to-one correspondence between self-intersection points of $\mathscr{C}(c)$ and all chords from $C(\m{c})$.

%%%% FIGURE 5
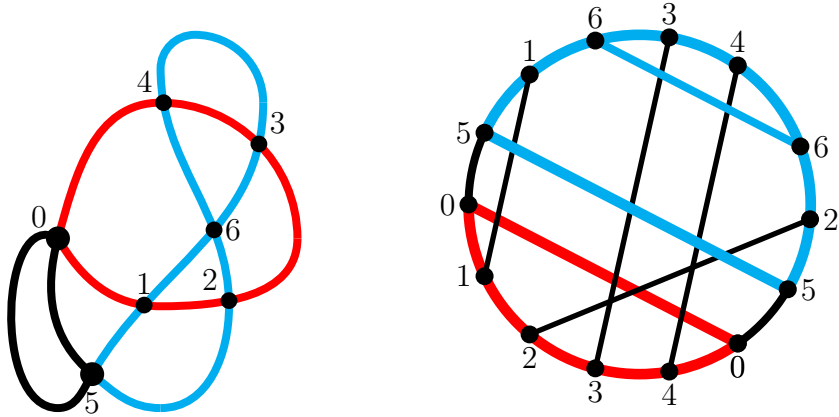
\begin{figure}[h!]
 \begin{center}
\begin{tikzpicture}[scale = 4.5]
 \draw [line width = 3, name path= a, red] (0.3,0.4) to [out = 300, in = 180] (0.6,0.2) to [out = 0, in= 270] (1,0.4);
 \draw [line width = 3, name path= aa, red] (1,0.4) to [out = 90, in = 0] (0.6, 0.8) to [out = 180, in = 75] (0.3,0.4);
 \draw [line width =3] (0.3,0.4) [out = 255, in = 140] to (0.4, 0);
 \draw[line width =3, name path= b, cyan] (0.4,0) to [out = 320, in =180] (0.6,-0.1);
 \draw[line width =3, name path= c, cyan](0.6,-0.1) to [out= 0, in = 270] (0.8,0.2);
 \draw[line width =3, name path= d, cyan] (0.8,0.2)to [out = 90, in =270] (0.6, 0.9) to [out = 90, in = 180] (0.7, 1) to [out= 0, in = 90] (0.9,0.8);
 \draw[line width =3, name path= bb, cyan](0.9,0.8)to [out= 270, in = 60] (0.4, 0);
 \draw [line width =3] (0.4,0) to [out = 240, in =0] (0.3, -0.1) to [out = 180, in =150] (0.3,0.4);
 %intersection
  \fill [name intersections={of=a and bb, by={1}}]
(1) circle (0.7pt) node[above] {$1$};
\fill [name intersections={of=a and d, by={2}}]
(2) circle (0.7pt) node[above left] {$2$};
\fill [name intersections={of=aa and bb, by={3}}]
(3) circle (0.7pt) node[above right] {$3$};
\fill [name intersections={of=aa and d, by={4}}]
(4) circle (0.7pt) node[above left] {$4$};
\fill [name intersections={of=bb and d, by={5}}]
(5) circle (0.7pt) node[right] {$6$};
%beginning of cycles
\fill (0.3,0.4) circle(1pt) node[above left] {$0$};
\fill (0.4,0) circle(1pt) node[below =1mm] {$5$};
\begin{scope}[scale = 0.25, xshift = 8cm, yshift = 2cm, line width=2]
   \draw[line width =3] (0,0) circle (2);
 %===ORANGE ARC=======
   \begin{scope}[rotate =180]
   \draw[red, line width=4] (2,0) arc(0:125:2);
   \end{scope}
 %===CAYN ARC=========
    \begin{scope}[rotate =330]
   \draw[cyan, line width=4] (2,0) arc(0:185:2);
   \end{scope}
 %====CHORDS=====
    \draw[line width=4, red] (180:2) -- (305:2);
    \draw (230:2)--(355:2);
    \draw (280:2)--(55:2);
    \draw (255:2)--(80:2);
    \draw[cyan,line width=3] (20:2)--(105:2);
    \draw (205:2)--(130:2);
    \draw[cyan,line width=4] (330:2)--(155:2);
    %====POINTS=====
    \fill (180:2) circle(3pt) node[left] {$0$};
    \fill (205:2) circle(3pt) node[left] {$1$};
    \fill (230:2) circle(3pt) node[below] {$2$};
    \fill (255:2) circle(3pt) node[below] {$3$};
    \fill (280:2) circle(3pt) node[below] {$4$};
    \fill (305:2) circle(3pt) node[below] {$0$};
    \fill (305:2) circle(3pt) node[below] {$0$};
    \fill (330:2) circle(3pt) node[right] {$5$};
    \fill (355:2) circle(3pt) node[right] {$2$};
    \fill (20:2) circle(3pt) node[right] {$6$};
    \fill (55:2) circle(3pt) node[above] {$4$};
    \fill (80:2) circle(3pt) node[above] {$3$};
    \fill (105:2) circle(3pt) node[above] {$6$};
    \fill (130:2) circle(3pt) node[above] {$1$};
    \fill (155:2) circle(3pt) node[left] {$5$};
  \end{scope}
 \end{tikzpicture}
 \end{center}
 \caption{Every $C$-contour of the Gauss diagram correspondences to the closed path along the plane curve and vise versa. We see that the chord $6$ correspondences to the self-intersection point $6$ of the cyan loop.}\label{arc}
\end{figure}

\begin{example}
  In {\sc Figure} \ref{arc} the plane curve $\mathscr{C}$ and its Gauss diagram are shown. Consider the (cyan) $C$-contour $C(5)$. We see that $\mathscr{C}(5)$ is the closed path along the curve. It is the self-intersecting path and we see that the crossing $6$ correspondences to the chord from the set $5_\parallel$. Further, the red closed path $\mathscr{C}(0)$ correspondences to the red $C$-contour $C(0)$.
\end{example}

\begin{definition}
Let $\m{G}$ be a Gauss diagram, $\m{a},\m{b}$ its intersecting chords. \textit{An $X$-contour}, denoted $X(\m{a},\m{b})$, consists of two non-intersecting arcs $\m{a}_0\m{b}_0$, $\m{a}_1\m{b}_1$ and all chords of $\m{G}$ such that all their endpoints lie on $\m{a}_0\m{b}_0$ or on $\m{a}_1\m{b}_1$. A chord is called \textit{the door chord of the $X$-contour $X(\m{a},\m{b})$} if only one of its endpoints belongs to $X(\m{a},\m{b})$. We say that the $X$-contour $X(\m{a},\m{b})$ is \textit{non-degenerate} if it has at least one door chord, and it does not contain all chords of $\m{G}$.
\end{definition}

\begin{example}
Let us consider the Gauss diagram in {\sc Figure \ref{X-cont}}. The green chords are the door chords of the orange $X$-contour $X(1,3)$. We see that the door chords correspondence to ``entrances'' and ``exits'' of the orange closed path along the curve. We also see that this $X$-contour is non-degenerate.
\end{example}

The previous Example implies a partition of a Gauss diagram (\textit{resp.} a plane curve) into two parts.

\begin{definition}[\textbf{An $X$-contour coloring}]\label{coloring}
Given a Gauss diagram $\m{G}$ and its an $X$-contour. Let us walk along the circle of $\m{G}$ in a chosen direction and color all arcs of $\m{G}$ until returning back to the origin as follows: (1) we don't colors the arcs of the $X$-contour, (2) we use only two different colors, (3) we change a color whenever we meet an endpoint of a door chord.
\end{definition}

Similarly, one can define \textbf{a $C$-contour coloring} of a Gauss diagram $\m{G}$.

\begin{remark}
Let $\m{G}$ be a Gauss diagram and $\mathscr{C}$ the corresponding (may be virtual) plane curve, \textit{i.e.,} $\m{G}$ determines the curve $\mathscr{C}$. For every $X$-contour $X(\m{a},\m{b})$ in $\m{G}$, we can associate the closed path along the curve $\mathscr{C}$. We call this path \textit{the $\mathscr{X}$-contour} and denote by $\mathscr{X}(a,b)$. Similarly one can define \textit{door crossing} for $\mathscr{X}(a,b)$.

Further, for the $X(\m{a},\m{b})$-contour coloring of $\m{G}$, we can associate \textit{$\mathscr{X}(a,b)$-contour coloring of the curve $\mathscr{C}$}.

Next, let $\m{G}$ be a realizable Gauss diagram determines the plane curve $\mathscr{C}$ and let $X(\m{a},\m{b})$ be an $X$-contour of $\m{G}$ such that $\mathscr{X}(a,b)$ is the non-self-intersecting path (= the Jordan curve). Then the $\mathscr{X}(a,b)$-contour coloring of $\mathscr{C}$ divides the curve $\mathscr{C}$ into two colored parts, \textit{cf.} Jordan curve Theorem.
\end{remark}

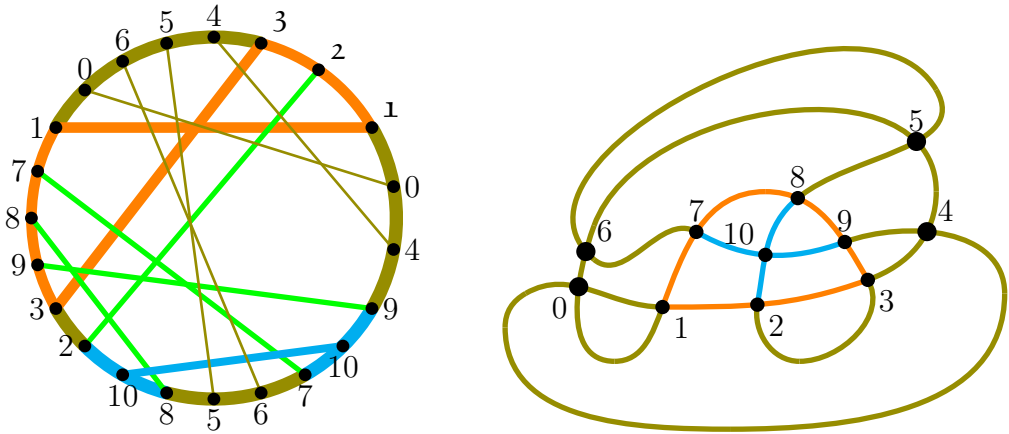
\begin{figure}[h!]
 \begin{tikzpicture}
\begin{scope}[xshift = 3cm,scale =0.9]
  \draw [line width = 2,name path =v1, olive] (-1,0) to [out = 30, in = 180] (1,-0.2);
  \draw[line width =2, name path= o1,orange] (1,-0.2) to [out =0, in = 200] (4,0.2);
  \draw [line width = 2, name path= a, olive] (4,0.2) to [out =20 , in = 270] (5,1.5);
  \draw [line width = 2, name path= b, olive] (5,1.5) to [out = 90, in = 70] (0,1);
  \draw [line width = 2, name path =c,olive] (0,1) to [out = 250, in = 180] (0.3,-1);
  \draw [line width = 2, olive] (0.3,-1) to [out = 0, in = 250] (1,-0.2);
  \draw [line width = 2, name path= o2, orange] (1,-0.2) to [out = 70, in=180] (2.5,1.5);
  \draw [line width = 2, name path= o3, orange] (2.5,1.5) to [out = 0, in = 120] (4,0.2);
  \draw [line width =2, olive] (4,0.2) to [out = 300, in = 0] (3,-1);
  %helped lines
  \draw [name path= w, white] (3,-1) to [out = 180, in = 240] (2.5,0.3);
  %intersections
  \fill [name intersections={of=o1 and w, by={T4}}]
 (T4) circle (1pt);
 %h2
 \draw [name path= h2, white] (T4) to (3.2,2);
  \fill [name intersections = {of =o3 and h2, by ={T2}}]
 (T2) circle (1pt);
 %h3
 \draw [name path= h3, white] (T4) to (4,1);
  \fill [name intersections = {of =o3 and h3, by ={T3}}]
 (T3) circle (1pt);
 %h4
 \draw [name path= h4, white] (T4) to (1,1.5);
  \fill [name intersections = {of =o2 and h4, by ={T1}}]
 (T1) circle (1pt);
%zzzzz
\draw [line width =2, olive] (3,-1) to [out = 180, in = 260] (T4);
\draw [line width =2,name path=y1, cyan] (T4) to [out = 80, in = 220] (T2);
\draw [line width =2, name path = d, olive] (T2) to [out = 40, in = 300] (5,3);
\draw [line width =2, name path = d1, olive] (5,3) to [out = 120, in = 140] (0,0.5);
\draw [line width =2,name path=v2, olive] (0,0.5) to [out = 320, in = 150] (T1);
\draw [line width =2, name path = y2, cyan] (T1) to [out = 330, in = 200] (T3);
\draw [line width =2, name path = d2, olive] (T3) to [out = 20, in = 90] (6,0);
\draw [line width =2, olive] (6,0) to [out = 270, in = 0] (2,-2);
\draw [line width =2, olive] (2,-2) to [out = 180, in = 270] (-1.3,-0.5);
\draw [line width =2, olive] (-1.3, -0.5) to [out = 90, in = 210] (-1,0);
%intersections
\fill (T1) circle (3pt) node [above] {$7$};
\fill (T2) circle (3pt) node [above] {$8$};
\fill (T3) circle (3pt) node [above] {$9$};
\fill (T4) circle (3pt) node [below right] {$2$};
%yellow point
 \fill [name intersections = {of =y1 and y2, by ={I}}]
 (I) circle (3pt) node [above left] {$10$};
%orange
  \fill (1,-0.2) circle (3pt) node [below right] {$1$};
  \fill (4,0.2) circle (3pt) node [below right] {$3$};
%violet
 \fill [name intersections = {of =v1 and c, by ={a1}}]
 (a1) circle (4pt) node [below left] {$0$};
 \fill [name intersections = {of =a and d2, by ={a2}}]
 (a2) circle (4pt) node [above right] {$4$};
  \fill [name intersections = {of =b and d, by ={a3}}]
 (a3) circle (4pt) node [above] {$5$};
  \fill [name intersections = {of =d1 and c, by ={a4}}]
 (a4) circle (4pt) node [above right] {$6$};
\end{scope}
%----------------------------------------------------------------------
%----------------------------------------------------------------------
 \begin{scope}[xshift =-2cm, yshift = 1cm,scale=0.6]
   \draw[line width =2] (0,0) circle (4);
   %layers
   \begin{scope}[rotate =30]
   \draw[orange, line width=4] (4,0) arc(0:45:4);
   \end{scope}
    \begin{scope}[rotate =150]
   \draw[orange, line width=4] (4,0) arc(0:60:4);
   \end{scope}
   %olive 3--1
   \begin{scope}[rotate =75]
   \draw[olive, line width=5] (4,0) arc(0:75:4);
   \end{scope}
   %olive 3--2
   \begin{scope}[rotate =210]
   \draw[olive, line width=5] (4,0) arc(0:15:4);
   \end{scope}
   %cyan 2--8
   \begin{scope}[rotate =225]
   \draw[cyan, line width=5] (4,0) arc(0:30:4);
   \end{scope}
   %olive 8--7
   \begin{scope}[rotate =255]
   \draw[olive, line width=5] (4,0) arc(0:45:4);
   \end{scope}
   %cyan 7--9
   \begin{scope}[rotate =300]
   \draw[cyan, line width=5] (4,0) arc(0:30:4);
   \end{scope}
   %olive 9--1
   \begin{scope}[rotate =330]
   \draw[olive, line width=5] (4,0) arc(0:60:4);
   \end{scope}
   %chords
   \draw[line width=4, orange] (30:4) -- (150:4);
   \draw[line width=4, orange] (75:4) -- (210:4);
   \draw[line width = 2,green] (55:4)--(225:4);
   \draw[line width =2,green] (165:4) -- (300:4);
   \draw[line width =2,green] (180:4) -- (255:4);
   \draw[line width =2,green] (195:4) -- (330:4);
   \draw[line width =3, cyan] (240:4) -- (315:4);
   \draw[line width =1, olive] (105:4) -- (270:4);
   \draw[line width =1, olive] (120:4) -- (285:4);
   \draw[line width =1, olive] (90:4) -- (350:4);
   \draw[line width =1, olive] (135:4) -- (10:4);
   %points
    \fill (30:4) circle(4pt) node[above right] {$\m{1}$};
    \fill (55:4) circle(4pt) node[above right] {$\m{2}$};
    \fill (75:4) circle(4pt) node[above right] {$\m{3}$};
    \fill (90:4) circle(4pt) node[above] {$4$};
    \fill (105:4) circle(4pt) node[above] {$5$};
    \fill (120:4) circle(4pt) node[above] {$6$};
    \fill (135:4) circle(4pt) node[above] {$0$};
    \fill (150:4) circle(4pt) node[left] {$1$};
    \fill (165:4) circle(4pt) node[left] {$7$};
    \fill (180:4) circle(4pt) node[left] {$8$};
    \fill (195:4) circle(4pt) node[left] {$9$};
    \fill (210:4) circle(4pt) node[left] {$3$};
    \fill (225:4) circle(4pt) node[left] {$2$};
    \fill (240:4) circle(4pt) node[below] {$10$};
    \fill (255:4) circle(4pt) node[below] {$8$};
    \fill (270:4) circle(4pt) node[below] {$5$};
    \fill (285:4) circle(4pt) node[below] {$6$};
    \fill (300:4) circle(4pt) node[below] {$7$};
    \fill (315:4) circle(4pt) node[below] {$10$};
    \fill (330:4) circle(4pt) node[right] {$9$};
    \fill (350:4) circle(4pt) node[right] {$4$};
    \fill (10:4) circle(4pt) node[right] {$0$};
    \end{scope}
\end{tikzpicture}
\caption{For the $X(1,3)$-contour coloring of the Gauss diagram, we associate the plane curve coloring. We see that the $\mathscr{X}$-contour $\mathscr{X}(1,3)$ (= orange loop) divides the plane curve into two colored parts.}\label{X-cont}
\end{figure}

%%%%1111111

%%%%%%%%%%%%%%111

\section{The Even and The Sufficient Conditions}
If a Gauss diagram can be realized by a plane curve we then say that this Gauss diagram is \textit{realizable,} and \textit{non-realizable} otherwise. So, in this section, we give a criterion allowing verification and comprehension of whether a given Gauss diagram is realizable or not. Moreover, we give an explanation allowing comprehension of why the needed condition is not sufficient for realizability of Gauss diagrams.

\subsection{The Even Condition}

\begin{proposition}\label{prop1}
Let $\mathscr{C}:S^1 \to \mathbb{R}^2$ be a plane curve and $\m{G}$ its Gauss diagram. Then
\begin{itemize}
   \item[(1)] $|\m{a}_\times \cap \m{b}_\times| \equiv 0 \bmod 2$ for every two non-interesting chords $\m{a,b \in G}$,
  \item[(2)] $|\m{c}_\times| \equiv 0 \bmod 2$ for every chord $\m{c \in G}$.
\end{itemize}
\end{proposition}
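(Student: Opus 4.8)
The plan is to reduce both statements to a single topological fact, which I will call the \emph{even--intersection lemma}: any two closed curves $\alpha,\beta$ in $\mathbb{R}^2$ that are in general position (meeting transversally at finitely many points, none of which is a self-intersection of either) meet in an even number of points. Its proof is exactly the ``inside/outside'' mechanism of idea (3) in the Introduction: the mod-$2$ winding number $w_\alpha\colon \mathbb{R}^2\setminus\alpha\to\mathbb{Z}/2$ is locally constant and flips its value across $\alpha$, so traversing the closed curve $\beta$, which returns to its starting value, forces the number of transversal crossings of $\alpha$ to be even. Note that self-intersections of $\alpha$ or of $\beta$ are irrelevant to this count. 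Each of (1) and (2) will be an identification of the chord set in question with the set of transversal intersections of two suitable closed loops read off from the circle of $\m{G}$.

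For (2), parametrize $S^1=\mathbb{R}/\mathbb{Z}$ so that $\m{c}$ has endpoints $0$ and $\tfrac12$, and set $\eta=\mathscr{C}|_{[0,1/2]}$ and $\gamma=\mathscr{C}|_{[1/2,1]}$. Each is a closed loop based at the crossing $c$, and a chord $\m{d}\neq\m{c}$ lies in $\m{c}_\times$ precisely when its two endpoints separate $0,\tfrac12$, i.e.\ precisely when one of its preimages lies on $\eta$ and the other on $\gamma$. Hence the transversal intersections of $\eta$ with $\gamma$ lying away from $c$ are exactly the points of $\m{c}_\times$. The two loops share only the basepoint $c$; a local inspection at $c$ shows that of the four half-strands meeting there, $\eta$ carries two cyclically adjacent ones and $\gamma$ the complementary adjacent pair, so $\eta$ and $\gamma$ meet at $c$ as two corners rather than transversally (equivalently, an oriented smoothing at $c$ removes this point without altering any other crossing). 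Pushing $\eta$ off $c$ therefore yields two closed curves in general position whose intersection set is exactly $\m{c}_\times$, and the lemma gives $|\m{c}_\times|\equiv 0\bmod 2$.

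For (1) no push-off is needed. Parametrize so that $\m{a}$ has endpoints $0,\tfrac12$; since $\m{a},\m{b}$ do not cross, both endpoints of $\m{b}$ lie on one arc of $\m{a}$, say $s_0<s_1$ in $(0,\tfrac12)$. A short case analysis of where the endpoints of a chord must sit shows that $\m{d}\in\m{a}_\times\cap\m{b}_\times$ if and only if $\m{d}$ has one endpoint on the inner arc $(s_0,s_1)$ of $\m{b}$ and the other on the arc $(\tfrac12,1)$ of $\m{a}$ not containing $\m{b}$. On the curve, $B:=\mathscr{C}|_{[s_0,s_1]}$ is a closed loop based at $b$ and $A:=\mathscr{C}|_{[1/2,1]}$ is a closed loop based at $a$; their basepoints are distinct (and, since $\mathscr{C}$ has only double points, $a\notin B$ and $b\notin A$), so they meet only at transversal double points of $\mathscr{C}$, and these meeting points are in bijection with $\m{a}_\times\cap\m{b}_\times$. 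The even--intersection lemma again gives $|\m{a}_\times\cap\m{b}_\times|\equiv 0\bmod 2$.

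The routine parts are the endpoint case analysis in (1) and the verification that all configurations are generic (transversal double points, distinct basepoints), both immediate from $\mathscr{C}$ being a generic immersion. I expect the only genuine point of care to be the even--intersection lemma together with the shared basepoint in (2): one must confirm that at $c$ the loops $\eta,\gamma$ touch non-transversally, so that this single common point contributes nothing to the mod-$2$ count and can be discarded by a local push-off. This is precisely why (1), where the two loops already have distinct basepoints, needs no such adjustment, while (2) does.
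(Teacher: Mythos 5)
Your proof is correct, but it reaches the parity statement by a genuinely different technical route than the paper. Both arguments begin with the same decomposition: the chord $\m{c}$ (resp.\ the pair $\m{a},\m{b}$) splits the curve into two loops, and $\m{c}_\times$ (resp.\ $\m{a}_\times\cap\m{b}_\times$) is identified with the intersection set of those loops. They diverge at the key step. The paper first invokes its smoothing result (Proposition \ref{RemConway}) to Conway-smooth away every chord of $\m{a}_\parallel\cap\m{b}_\parallel\setminus\{\m{a},\m{b}\}$, checking that this preserves the set $\m{a}_\times\cap\m{b}_\times$; this reduces to the case where both loops are embedded Jordan curves, and only then does it apply the Jordan curve theorem, counting entrances and exits of one loop through the other. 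You skip the reduction entirely: your even--intersection lemma (the mod-$2$ winding number of a closed curve is locally constant and flips across regular points) applies to self-intersecting loops in general position, so the loops can be used exactly as they stand. What the paper's route buys is economy and coherence: everything stays at the level of the Jordan curve theorem plus the smoothing machinery it needs later anyway. What your route buys is twofold: you avoid having to verify that smoothing preserves the relevant chord sets, and --- more substantively --- your treatment of part (2) is more careful than the paper's, which disposes of it with ``arguing similarly,'' silently passing over the fact that the two loops attached to a single chord $\m{c}$ share the basepoint $c$. You explicitly check that at $c$ the two loops carry complementary cyclically adjacent pairs of half-strands, so they meet in a corner rather than transversally, and the point is removed by a local push-off without affecting the mod-$2$ count. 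That shared-basepoint check is precisely where a careless version of this argument would fail, and you identified it correctly.
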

\begin{proof}
Let $\m{a},\m{b}\in\m{G}$ be two non-intersecting chords of $\m{G}$. Take two $C$-contours $C(\m{a})$, $C(\m{b})$ such that their arcs $\m{a_0a_1}$, $\m{b_0b_1}$ do not intersect. It is obvious that for the loops $\mathscr{C}(a)$, $\mathscr{C}(b)$, we can associate the one-to-one correspondence between the set $\mathscr{C}(a) \cap \mathscr{C}(b)$ and the set $\m{a}_\times\cap\m{b}_\times$.

Because, by Proposition \ref{RemConway}, all chord from the set $\m{a}_\times \cap \m{b}_\times$ keep their positions in Gauss diagram $\m{\widehat{G}_c}$ (= Conway's smoothing the chord $\m{c}$) for every $\m{c} \in \m{a}_\parallel \cap \m{b}_\parallel \setminus \{\m{a},\m{b}\}$, it is sufficient to prove the statement in the case $\m{a}_\parallel \cap \m{b}_\parallel = \{\m{a},\m{b}\}$, \textit{i.e.,} the loops $\mathscr{C}(a)$, $\mathscr{C}(b)$ are non-self-intersecting loops (= the Jordan curves).

From Jordan curve Theorem, it follows that the loop $\mathscr{C}(a)$ divides the curve $\mathscr{C}$ into two regions, say, $\mathscr{I}$ and $\mathscr{O}$. Assume that $b\in \mathscr{O}$ and let us walk along the loop $\mathscr{C}(b)$. We say that an intersection point $c \in \mathscr{C}(a) \cap \mathscr{C}(b)$ is the entrance (\textit{resp.} the exit) if we shall be in the region $\mathscr{I}$ (\textit{resp.} $\mathscr{O}$) after meeting $c$ with respect to our walk. Since a number of entrances has to be equal to the number of exits, then $|\m{a}_\times \cap \m{b}_\times| \equiv 0\bmod 2$. Arguing similarly, we prove $|\m{c}_\times| \equiv 0 \bmod 2$ for every chord $\m{c}$.
\end{proof}

As an immediate consequence of Proposition \ref{prop1} we get the following.

\begin{corollary}[{\bf The Even Condition}]\label{strongeven}
  If a Gauss diagram is realizable then the number of all chords that cross a both of non-intersecting chords and every chord is even (including zero).
\end{corollary}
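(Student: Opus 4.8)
The plan is to read off the Corollary directly from Proposition \ref{prop1}, so the only real work is to unwind the definition of realizability and to match the two clauses of the Proposition against the two assertions in the statement. First I would recall that a Gauss diagram $\m{G}$ being \emph{realizable} means, by definition, that there exists a plane curve $\mathscr{C}\colon S^1\to\mathbb{R}^2$ whose Gauss diagram is exactly $\m{G}$. This existence is precisely the hypothesis that licenses the use of Proposition \ref{prop1}, whose statement presupposes that we are given such a curve $\mathscr{C}$ together with its Gauss diagram.

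Next I would apply both clauses of Proposition \ref{prop1} to this curve $\mathscr{C}$. Clause (1) yields $|\m{a}_\times\cap\m{b}_\times|\equiv 0\bmod 2$ for every pair of non-intersecting chords $\m{a},\m{b}\in\m{G}$; since $\m{a}_\times\cap\m{b}_\times$ is precisely the set of chords crossing both $\m{a}$ and $\m{b}$, its even cardinality is exactly the first assertion of the Corollary. Clause (2) yields $|\m{c}_\times|\equiv 0\bmod 2$ for every chord $\m{c}\in\m{G}$, and $\m{c}_\times$ is by definition the set of all chords crossing $\m{c}$, so its even cardinality is the second assertion. The parenthetical ``including zero'' is automatic, since $0$ is even and the congruences do not exclude the empty intersection.

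There is essentially no obstacle here: the Corollary is a verbal paraphrase of the Proposition, and the deduction consists only in observing that ``realizable'' supplies the curve $\mathscr{C}$ demanded by Proposition \ref{prop1}, and that the two index sets $\m{a}_\times\cap\m{b}_\times$ and $\m{c}_\times$ name exactly the chord-counts mentioned in the Corollary. The one point worth flagging is that the Proposition is stated relative to a fixed realizing curve, whereas the Corollary is phrased as a property of the diagram alone; this is resolved by noting that any realizing curve will serve, so the parity conclusion is a genuine invariant of $\m{G}$ and is independent of the choice of $\mathscr{C}$.
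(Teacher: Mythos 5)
Your proposal is correct and matches the paper exactly: the paper states this Corollary as an immediate consequence of Proposition \ref{prop1}, with no additional argument, and your unwinding of ``realizable'' to supply the curve $\mathscr{C}$ required by the Proposition, together with the identification of $\m{a}_\times\cap\m{b}_\times$ and $\m{c}_\times$ with the chord-counts in the statement, is precisely that deduction. Your closing remark that the parity conclusion is independent of the choice of realizing curve is a sensible (if routine) point that the paper leaves implicit.
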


We conclude this subsection with an explanation why the even condition is not sufficient for realizability of Gauss diagrams.

Roughly speaking, from the proof of Proposition \ref{prop1} it follows that every plane curve can be obtained by attaching its loops to each other by given points. Conversely, if a Gauss diagram satisfies the even condition then it may be non-releasible. Indeed, when we attach a loop, say, $\mathscr{C}(b)$ to a loop $\mathscr{C}(a)$, where $\m{b} \in \m{a}_\parallel$, by given points (=elements of the set $\m{a}_\times \cap \m{b}_\times$) then the loop $\mathscr{C}(b)$ can be self-intersected curve, which means that we get new crossings (= virtual crossings), see {\sc Figure} \ref{rasdutyitrilistnik}.

To be more precisely, we have the following proposition.

\begin{proposition}\label{G=attach}
  Let $\m{G}$ be a non-realizable Gauss diagram which satisfies the even condition. Let $\m{G}$ defines a virtual plane curve $\mathscr{C}$ (up to virtual moves). There exist two non-intersecting chords $\m{a,b \in G}$ such that there are paths $c \to x \to d$, $e \to x \to f$ on a loop $\mathscr{C}(b)$, where $\m{c,d,e,f \in a_\times \cap b_\times}$ are different chords and $x$ is a virtual crossing of $\mathscr{C}$.
\end{proposition}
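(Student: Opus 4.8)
The plan is to produce the predicted virtual crossing from a single well-chosen pair of non-intersecting chords, and to read off its four branches using the Jordan-curve/entrance–exit bookkeeping already set up for Proposition \ref{prop1}. I would work throughout with the virtual plane curve $\mathscr{C}$ furnished by Theorem \ref{virt}, and argue that non-realizability must concentrate in the attachment of one loop to another, exactly as in the heuristic following Corollary \ref{strongeven}.

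First I would arrange a \emph{simple} pair. I want two non-intersecting chords $\m{a},\m{b}\in\m{G}$ with $\m{a}_\parallel\cap\m{b}_\parallel=\{\m{a},\m{b}\}$, so that, just as in the proof of Proposition \ref{prop1}, the loops $\mathscr{C}(a)$ and $\mathscr{C}(b)$ carry no \emph{real} self-intersections and meet each other precisely in the points of $\m{a}_\times\cap\m{b}_\times$. Conway smoothing is the tool: by Proposition \ref{RemConway}, smoothing any chord of $\m{a}_\parallel\cap\m{b}_\parallel\setminus\{\m{a},\m{b}\}$ leaves every chord of $\m{a}_\times\cap\m{b}_\times$ in place, hence preserves both loops and their mutual crossings near the region of interest. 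The point I must secure here is that non-realizability genuinely forces such a pair: if for \emph{every} simple pair the loop $\mathscr{C}(b)$ could be threaded through the points of $\m{a}_\times\cap\m{b}_\times$ without self-crossing, then, assembling $\mathscr{C}$ loop by loop, all the $C$- and $X$-contour colorings (Definition \ref{coloring}) would be globally consistent and one could draw $\mathscr{C}$ with no virtual crossing, contradicting that $\m{G}$ is non-realizable. This is the step that selects the pair $(\m{a},\m{b})$ and the offending loop $\mathscr{C}(b)$.

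With the pair fixed, $\mathscr{C}(a)$ is a Jordan curve splitting the plane into $\mathscr{I}$ and $\mathscr{O}$, and I walk along $\mathscr{C}(b)$ recording at each point of $\m{a}_\times\cap\m{b}_\times$ whether it is an entrance ($\mathscr{O}\to\mathscr{I}$) or an exit ($\mathscr{I}\to\mathscr{O}$), reproducing the count of Proposition \ref{prop1}. The even condition (Corollary \ref{strongeven}) makes entrances and exits balance, but the \emph{cyclic interleaving} of the order in which $\mathscr{C}(b)$ visits these points against their positions along $\mathscr{C}(a)$, together with the side-alternation, need not be a nested (planar) pattern. A violation of this planarity condition is exactly an unavoidable transversal self-crossing of $\mathscr{C}(b)$, which is the virtual crossing $x$ of $\mathscr{C}$. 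Because the pair is simple, $\mathscr{C}(b)$ has no real self-crossings and its only real marked points are the door crossings $\m{a}_\times\cap\m{b}_\times$; hence the four branches issuing from $x$ run along $\mathscr{C}(b)$ and first meet real crossings that are door crossings, giving the two strands $\m{c}\to x\to\m{d}$ and $\m{e}\to x\to\m{f}$ with $\m{c},\m{d},\m{e},\m{f}\in\m{a}_\times\cap\m{b}_\times$.

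The main obstacle I anticipate is twofold and lies in making ``forced'' precise while guaranteeing the \emph{distinctness} of $\m{c},\m{d},\m{e},\m{f}$. On the one hand, I must ensure that the self-crossing is not an artifact removed by the smoothing reduction, i.e. that the non-planarity of the interleaving persists for the chosen pair; on the other, I must rule out the degenerate possibilities that two of the four branches at $x$ reach the same door crossing, or that $x$ is a self-tangency rather than a genuine transversal crossing of two distinct door-to-door segments of $\mathscr{C}(b)$. My intended resolution is a minimality argument: if fewer than four distinct door crossings were involved, the offending portion of $\mathscr{C}(b)$ could be redrawn inside the appropriate region of $\mathscr{C}(a)$ so as to avoid $x$, contradicting that $x$ is a genuine (non-removable) virtual crossing of the non-realizable diagram. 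Phrasing this forcedness combinatorially — in terms of the interleaving pattern and the parity supplied by the even condition — rather than appealing to a picture, is the delicate heart of the proof.
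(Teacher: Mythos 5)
Your overall strategy coincides with the paper's: view $\mathscr{C}$ as the loop $\mathscr{C}(b)$ attached to the loop $\mathscr{C}(a)$ at the points of $\m{a}_\times\cap\m{b}_\times$, argue that non-realizability forces a self-intersection of $\mathscr{C}(b)$ that is not a crossing recorded in $\m{G}$, and read off the two strands through it. But your opening reduction --- arranging a \emph{simple} pair $\m{a}_\parallel\cap\m{b}_\parallel=\{\m{a},\m{b}\}$ by Conway-smoothing all chords of $\m{a}_\parallel\cap\m{b}_\parallel\setminus\{\m{a},\m{b}\}$ --- is a genuine gap, and it is exactly the step the paper does not take. In Proposition \ref{prop1} that reduction is legitimate because the conclusion there is a parity count, which Proposition \ref{RemConway} shows is invariant under such smoothings. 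Here the conclusion is about the curve $\mathscr{C}$ defined by $\m{G}$ itself: the paths and the virtual crossing $x$ must lie on a loop of $\mathscr{C}$, and $\m{c,d,e,f}$ must be chords of $\m{G}$. After smoothing you are working with the curve of a different diagram $\widehat{\m{G}}$, and nothing transports your $x$ back to $\mathscr{C}$: smoothing need not preserve non-realizability, so $\widehat{\m{G}}$ may be realizable, in which case its curve can be taken (up to virtual moves) with \emph{no} virtual crossings and the ``forced self-crossing'' evaporates for that pair. Worse, controlling how realizability behaves under smoothing is essentially Proposition \ref{gen}, which the paper proves \emph{after} and \emph{from} the present proposition (via Proposition \ref{propaboutX}), so repairing the transfer along these lines risks circularity. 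Nor can you simply drop the smoothing: nothing guarantees that $\m{G}$ contains a simple pair at all (already when $\m{G}$ has three pairwise non-intersecting chords, no pair among them is simple), so your key forcing claim, quantified over simple pairs of $\m{G}$, could be vacuous while $\m{G}$ is still non-realizable.

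The paper's proof avoids all of this by taking an \emph{arbitrary} pair of non-intersecting chords with disjoint contour arcs, allowing $\mathscr{C}(b)$ to have real self-intersections: it asserts that, since $\m{G}$ satisfies the even condition, a virtual crossing can only arise as a self-intersection of such a loop, and that if for \emph{every} pair all self-intersections of $\mathscr{C}(b)$ corresponded to chords of $\m{G}$, then $\m{G}$ would be realizable; hence some pair has a self-intersection $x$ not in $\{p_1,\ldots,p_n\}=\mathscr{C}(a)\cap\mathscr{C}(b)$, whose two strands give the paths $c\to x\to d$ and $e\to x\to f$. To be fair, the paper leaves that realizability assertion, and the distinctness of $c,d,e,f$, at the level of assertion as well --- your closing concern about distinctness is legitimate --- but your proposed minimality fix is built on top of the smoothing reduction, which is the part that fails.
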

\begin{proof}
 Let $\m{a,b \in G}$ be two non-intersecting chords. Take non-intersecting $C$-contours $C(\m{a})$, $C(\m{b})$. Hence we may say that the loop $\mathscr{C}(b)$ attaches to the loop $\mathscr{C}(a)$ by the given points $p_1,\ldots,p_n $, where $\{\m{p_1,\ldots,p_n}\}= \m{a_\times \cap b_\times}$. Since $\m{G}$ is not realizable and satisfies the even condition then a virtual crossing may arise only as a self-intersecting point of, say, the loop $\mathscr{C}(b)$. Indeed, when we attach $\mathscr{C}(b)$ to $\mathscr{C}(a)$ by $p_1,\ldots,p_n $ we may get self-interesting points, say, $q_1,\ldots,q_m$ of the loop $\mathscr{C}(b)$. If $\m{G}$ contains all chords $\m{q_1},\ldots,\m{q}_m$ for every such chords $\m{a,b}$, then $\m{G}$ is realizable. Thus, a virtual crossing $x$ does not belong to $\{p_1,\ldots, p_n\} =  \mathscr{C}(a) \cap \mathscr{C}(b)$ for some non-intersecting chords $\m{a,b \in G}$. Then we get two paths $c \to x \to d$, $e \to x \to f$, where $c,d,e,f \in \mathscr{C}(a) \cap \mathscr{C}(b)$ are different chords, as claimed.
\end{proof}

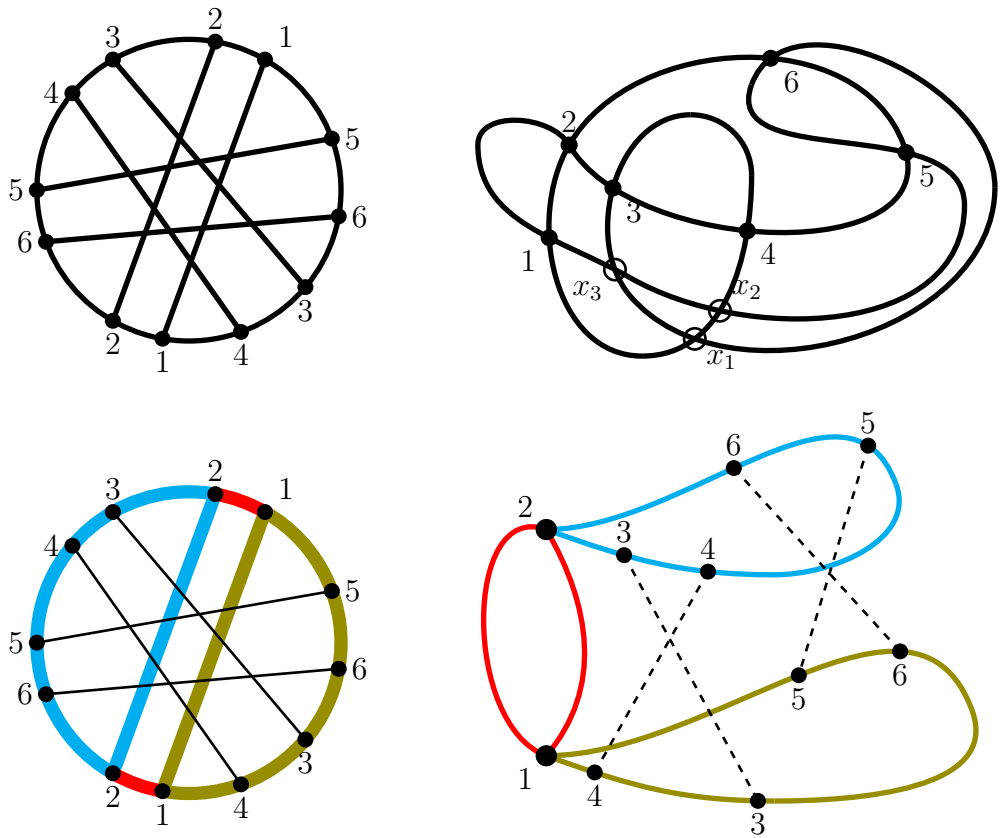
\begin{figure}[h!]
\begin{center}
 \begin{tikzpicture}[line width =2]
    \draw[line width =2] (0,0) circle (2);
    \draw (60:2) -- (260:2);
    \draw (80:2) -- (240:2);
    \draw (120:2)--(320:2);
    \draw (140:2) -- (290:2);
    \draw (180:2) -- (20:2);
    \draw (200:2) -- (350:2);
    \fill (60:2) circle(3pt) node[above right] {$1$};
    \fill (80:2) circle(3pt) node[above] {$2$};
    \fill (120:2) circle(3pt) node[above] {$3$};
    \fill (140:2) circle(3pt) node[left] {$4$};
    \fill (180:2) circle(3pt) node[left] {$5$};
    \fill (200:2) circle(3pt) node[left] {$6$};
    \fill (240:2) circle(3pt) node[below] {$2$};
    \fill (260:2) circle(3pt) node[below] {$1$};
    \fill (290:2) circle(3pt) node[below] {$4$};
    \fill (320:2) circle(3pt) node[below] {$3$};
    \fill (350:2) circle(3pt) node[right] {$6$};
    \fill (20:2) circle(3pt) node[right] {$5$};
  \begin{scope}[xshift=5cm, yshift = 0.6cm, line width =2,scale = 0.8]
    \draw [name path= a] (0,0) to [out = 300, in = 290] (5.5,0);
    \draw[name path =a2](5.5,0) to [out = 110, in = 60] (0,0);
    \draw[name path = b1] (0,0) to [out = 240, in = 180](1.2,-3.5);
    \draw[name path =b2](1.2, -3.5) to [out = 0, in = 270] (3,-0.5);
    \draw[name path =c] (3,-0.5) to [out = 90, in = 0] (2,0.5);
    \draw[name path =c2](2,0.5) to [out = 180, in =130] (1,-2.5) to [out = 310, in = 270] (7,-0.7);
    \draw[name path =g1] (7,-0.7) to [out = 90, in = 70] (3, 1);
    \draw[name path =g2](3,1) to [out = 250, in = 90] (6.5, -1) to [out = 270, in = 330] (1,-2.2);
    \draw[name path =e] (1,-2.2) to [out = 150, in = 270] (-1.5,0) to [out =90, in = 120] (0,0);
     %intersection
\fill [name intersections={of=e and b1, by={1}}]
(1) circle (4pt) node[below left] {$1$};
\fill [name intersections={of=a and b1, by={2}}]
(2) circle (4pt) node[above] {$2$};
\fill [name intersections={of=a and b2, by={4}}]
(4) circle (4pt) node[below right] {$4$};
\fill [name intersections={of=a and c2, by={3}}]
(3) circle (4pt) node[below right] {$3$};
\fill [name intersections={of=a and g2, by={5}}]
(5) circle (4pt) node[below right] {$5$};
\fill [name intersections={of=a2 and g1, by={6}}]
(6) circle (4pt) node[below right] {$6$};
\draw [line width =1,name intersections={of=c2 and b2, by={x1}}]
(x1) circle (5pt) node[below right] {$x_1$};
\draw [line width =1,name intersections={of=g2 and b2, by={x2}}]
(x2) circle (5pt) node[above right] {$x_2$};
\draw [line width =1,name intersections={of=c2 and e, by={x3}}]
(x3) circle (5pt) node[below left] {$x_3$};
\end{scope}
\begin{scope}[yshift=-6cm]
  \draw[line width =2] (0,0) circle (2);
    \draw[line width =5, olive] (60:2) -- (260:2);
    \draw[line width =5,cyan] (80:2) -- (240:2);
    \draw[line width =1] (120:2)--(320:2);
    \draw[line width =1] (140:2) -- (290:2);
    \draw[line width =1] (180:2) -- (20:2);
    \draw[line width =1] (200:2) -- (350:2);
%arcs
 %red 1--2
   \begin{scope}[rotate =60]
   \draw[red, line width=5] (2,0) arc(0:20:2);
   \end{scope}
 %cyan
   \begin{scope}[rotate =80]
   \draw[cyan, line width=5] (2,0) arc(0:160:2);
   \end{scope}
 %red 2--1
   \begin{scope}[rotate =240]
   \draw[red, line width=5] (2,0) arc(0:20:2);
   \end{scope}
 %olive
   \begin{scope}[rotate =260]
   \draw[olive, line width=5] (2,0) arc(0:160:2);
   \end{scope}
%points
    \fill (60:2) circle(3pt) node[above right] {$1$};
    \fill (80:2) circle(3pt) node[above] {$2$};
    \fill (120:2) circle(3pt) node[above] {$3$};
    \fill (140:2) circle(3pt) node[left] {$4$};
    \fill (180:2) circle(3pt) node[left] {$5$};
    \fill (200:2) circle(3pt) node[left] {$6$};
    \fill (240:2) circle(3pt) node[below] {$2$};
    \fill (260:2) circle(3pt) node[below] {$1$};
    \fill (290:2) circle(3pt) node[below] {$4$};
    \fill (320:2) circle(3pt) node[below] {$3$};
    \fill (350:2) circle(3pt) node[right] {$6$};
    \fill (20:2) circle(3pt) node[right] {$5$};
\begin{scope}[xshift = 4.7cm, yshift = 1.5cm]
 %help lines for 4--3
  \draw[name path = l1,white] (0.3,1)--(3,-4);
  \draw[name path =l2,white] (3,1)--(0.2,-4);
  %help lines for 5--6
  \draw[name path = l3,white] (2.3,1)--(5,-2);
  \draw[name path =l4,white] (4.5,2)--(3,-3);
  %lines
  \draw [line width =2, name path= c1, cyan] (0,0) to [out = 340, in = 180] (3,-0.6) to [out =0, in = 290] (4.6,0.6);
  \draw [line width =2, name path= c2, cyan] (4.6, 0.6) to  [out =110, in = 0] (0,0);
  \draw [line width =2, name path =o1,olive] (0,-3) to [out = 340, in = 180] (3,-3.6);
  \draw [line width =2, name path =o2,olive] (3,-3.6) to [out =0, in = 290] (5.6,-2.4) to  [out =110, in = 0] (0,-3);
  \draw[line width =2,red] (0,0) [out = 160, in = 160] to (0,-3);
  \draw[line width =2, red] (0,0) [out = 300, in = 50] to (0,-3);
  %intersections 4-3
    \fill [name intersections={of=c1 and l1, by={3u}}]
 (3u) circle (3pt) node[above] {$3$};
    \fill [name intersections={of=c1 and l2, by={4u}}]
 (4u) circle (3pt)node [above] {$4$};
    \fill [name intersections={of=o1 and l2, by={4d}}]
 (4d) circle (3pt)node [below] {$4$};
    \fill [name intersections={of=o1 and l1, by={3d}}]
 (3d) circle (3pt)node [below] {$3$};
 %intersections 5--6
  \fill [name intersections={of=c2 and l4, by={5u}}]
 (5u) circle (3pt) node[above] {$5$};
 \fill [name intersections={of=o2 and l4, by={5d}}]
 (5d) circle (3pt) node[below] {$5$};
 \fill [name intersections={of=c2 and l3, by={6u}}]
 (6u) circle (3pt) node[above] {$6$};
 \fill [name intersections={of=o2 and l3, by={6d}}]
 (6d) circle (3pt) node[below] {$6$};
 %dotted lines
 \draw[line width =1,dashed] (3u)--(3d);
 \draw[line width =1,dashed] (4u)--(4d);
 \draw[line width =1,dashed] (5u)--(5d);
 \draw[line width =1,dashed] (6u)--(6d);
 %points
 \fill (0,0) circle (4pt) node [above left] {$2$};
 \fill (0,-3) circle (4pt) node[below left] {$1$};
\end{scope}
\end{scope}
\end{tikzpicture}
  \end{center}
\caption{It shows that the even condition is not sufficient for realizability of Gauss diagrams. We see that the plane curve can be obtained by attaching cyan loop to olive loop by the points $3,4,5,6$, and thus the olive loop has to have ``new'' crossings (= self-intersections) $x_1,x_2,x_3$.}\label{rasdutyitrilistnik}
\end{figure}

\newpage
\subsection{The Sufficient Condition}

\begin{definition}
  Let $\m{G}$ be a Gauss diagram (not necessarily realizable) and $X(\m{a,b})$ its $X$-contour. Take the $X$-contour coloring of $\m{G}$. A chord of $\m{G}$ is called \textit{colorful for $X(\m{a,b})$} if its endpoints are in arcs which have different colors.
\end{definition}

Similarly, one can define \textit{a colorful chord for a $C$-contour} $C(\m{a})$ of $\m{G}$.

\begin{example}
Let us consider the Gauss diagram, which is shown in {\sc Figure} \ref{d}. One can easy check that this Gauss diagram is not realizable. Let us consider the orange $X$-contour $X(1,3)$ and the $X(1,3)$-coloring of $\m{G}$. The chord with the endpoints $5$ is colorful for the $X$-contour $X(1,3)$. It is interesting to consider the corresponding coloring of the virtual plane curve: one can think that we forget to change color when we cross the orange loop, \textit{i.e.,} the orange loop ``does not divide'' the curve into two parts. We shall show that this observation is typical for every non-realizable Gauss diagram.
\end{example}

\begin{figure}[h!]
\begin{tikzpicture}
\begin{scope}[yshift=-0.75cm,xshift=4.5cm,scale=0.9]
    \draw [line width = 2,name path =v1, olive] (-1,0) to [out = 30, in = 180] (1,-0.2);
  \draw[line width =2, name path= o1,orange] (1,-0.2) to [out =0, in = 200] (4,0.2);
  \draw [line width = 2, name path= a, olive] (4,0.2) to [out =20 , in = 270] (5,1.5);
  \draw [line width = 2, name path= b, olive] (5,1.5) to [out = 90, in = 70] (0,1);
  \draw [line width = 2, name path =c,olive] (0,1) to [out = 250, in = 180] (0.3,-1);
  \draw [line width = 2, olive] (0.3,-1) to [out = 0, in = 250] (1,-0.2);
  \draw [line width = 2, name path= o2, orange] (1,-0.2) to [out = 70, in=180] (2.5,1.5);
  \draw [line width = 2, name path= o3, orange] (2.5,1.5) to [out = 0, in = 120] (4,0.2);
  \draw [line width =2, olive] (4,0.2) to [out = 300, in = 0] (3,-1);
  %helped lines
  \draw [name path= w, white] (3,-1) to [out = 180, in = 240] (2.5,0.3);
  %intersections
  \fill [name intersections={of=o1 and w, by={T4}}]
 (T4) circle (1pt);
 %h2
 \draw [name path= h2, white] (T4) to (3.2,2);
  \fill [name intersections = {of =o3 and h2, by ={T2}}]
 (T2) circle (1pt);
 %h3
 \draw [name path= h3, white] (T4) to (4,1);
  \fill [name intersections = {of =o3 and h3, by ={T3}}]
 (T3) circle (1pt);
 %h4
 \draw [name path= h4, white] (T4) to (1,1.5);
  \fill [name intersections = {of =o2 and h4, by ={T1}}]
 (T1) circle (1pt);
%zzzzz
\draw [line width =2, olive] (3,-1) to [out = 180, in = 260] (T4);
\draw [line width =2,name path=y1, cyan] (T4) to [out = 80, in = 220] (T2);
\draw [line width =2, name path = d, cyan] (T2) to [out = 40, in = 300] (5,3);
\draw [line width =2, name path = d1, cyan] (5,3) to [out = 120, in = 140] (0,0.5);
\draw [line width =2,name path=v2, cyan] (0,0.5) to [out = 320, in = 150] (T1);
\draw [line width =2, name path = y2, olive] (T1) to [out = 330, in = 200] (T3);
\draw [line width =2, name path = d2, olive] (T3) to [out = 20, in = 90] (6,0);
\draw [line width =2, olive] (6,0) to [out = 270, in = 0] (2,-2);
\draw [line width =2, olive] (2,-2) to [out = 180, in = 270] (-1.3,-0.5);
\draw [line width =2, olive] (-1.3, -0.5) to [out = 90, in = 210] (-1,0);
%intersections
\fill (T1) circle (3pt) node [above] {$7$};
\draw (T2) circle (5pt) node [above] {$x$};
\draw (T3) circle (5pt) node [above] {$y$};
\fill (T4) circle (3pt) node [below right] {$2$};
%yellow point
 \draw [name intersections = {of =y1 and y2, by ={I}}]
 (I) circle (5pt) node [above left] {$z$};
%orange
  \fill (1,-0.2) circle (3pt) node [below right] {$1$};
  \fill (4,0.2) circle (3pt) node [below right] {$3$};
%violet
 \fill [name intersections = {of =v1 and c, by ={a1}}]
 (a1) circle (4pt) node [below left] {$0$};
 \fill [name intersections = {of =a and d2, by ={a2}}]
 (a2) circle (4pt) node [above right] {$4$};
  \fill [name intersections = {of =b and d, by ={a3}}]
 (a3) circle (4pt) node [above] {$5$};
  \fill [name intersections = {of =d1 and c, by ={a4}}]
 (a4) circle (4pt) node [above right] {$6$};
  \end{scope}
%----------------------------------------------------------------
%-----------------------------------------------------------------
  \begin{scope}[scale = 0.8]
    \draw[line width =2] (0,0) circle (3);
  %arcs===========================================
  %1--3
   \begin{scope}[rotate =112]
   \draw[orange, line width=5] (3,0) arc(0:43:3);
   \end{scope}
  %3--1
  \begin{scope}[rotate =260]
   \draw[orange, line width=5] (3,0) arc(0:45:3);
   \end{scope}
  %3--1(olive)
  \begin{scope}[rotate =155]
   \draw[olive, line width=4] (3,0) arc(0:105:3);
   \end{scope}
  %3--2(olive)
  \begin{scope}[rotate =305]
   \draw[olive, line width=4] (3,0) arc(0:25:3);
   \end{scope}
  %2--7(cyan)
  \begin{scope}[rotate =330]
   \draw[cyan, line width=4] (3,0) arc(0:70:3);
   \end{scope}
  %7--1
  \begin{scope}[rotate =40]
   \draw[olive, line width=4] (3,0) arc(0:72:3);
   \end{scope}
  %================================================
   \draw[line width =5,orange] (112:3) --(260:3);
   \draw[line width =1,olive] (90:3) -- (240:3);
   \draw[line width =5,orange] (155:3) --(305:3);
   \draw[line width=3,green] (134:3) -- (330:3);
   \draw[line width=2,teal] (200:3) --(355:3);
   \draw[line width=2,teal] (222:3) -- (15:3);
   \draw[line width =3,green] (285:3) --(40:3);
   \draw[line width=1,olive] (180:3)--(65:3);
    \fill(90:3) circle(3pt) node[above] {$0$};
    \fill(112:3) circle(3pt) node[above left] {$1$};
    \fill(134:3) circle(3pt) node[above left] {$2$};
    \fill(155:3) circle(3pt) node[above left] {$3$};
    \fill(180:3) circle(3pt) node[left] {$4$};
    \fill(200:3) circle(3pt) node[below left] {$5$};
    \fill(222:3) circle(3pt) node[below left] {$6$};
    \fill(240:3) circle(3pt) node[below] {$0$};
    \fill(260:3) circle(3pt) node[below] {$1$};
    \fill(285:3) circle(3pt) node[below] {$7$};
    \fill(305:3) circle(3pt) node[below] {$3$};
    \fill(330:3) circle(3pt) node[below right] {$2$};
    \fill(355:3) circle(3pt) node[right] {$5$};
    \fill(15:3) circle(3pt) node[right] {$6$};
    \fill(40:3) circle(3pt) node[above right] {$7$};
    \fill(65:3) circle(3pt) node[above] {$4$};
  \end{scope}
  \end{tikzpicture}
\caption{This Gauss diagram satisfies even condition but is non-realizable. There are colorful chords (\textit{e.g.} the chord with endpoints $5$).}\label{d}
\end{figure}
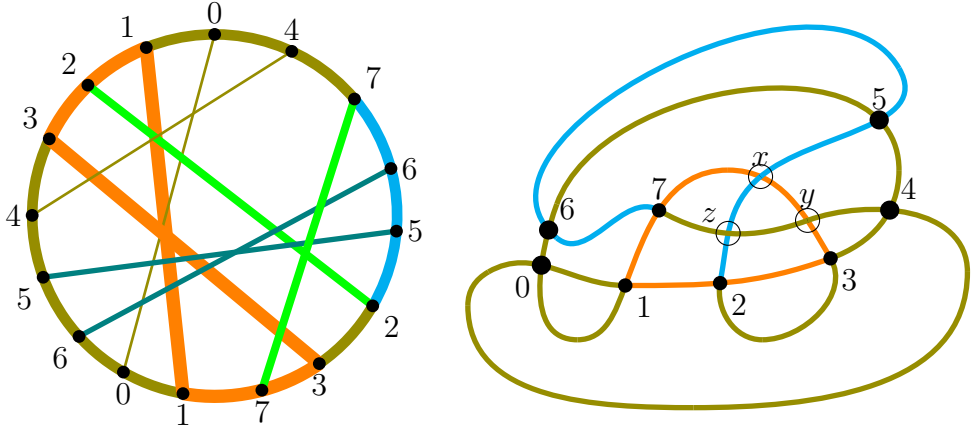

We have seen that if a Gauss diagram is realizable then there is no colorful chord, with respect to every $X$-contour. We shall show that it is sufficient condition for realizability of a Gauss diagram.

\begin{proposition}\label{propaboutX}
Let $\m{G}$ be a non-realizable Gauss diagram but satisfy the even condition. Then there exists an $X$-contour and a colorful chord for this $X$-contour.
\end{proposition}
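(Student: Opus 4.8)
The plan is to turn the failure of realizability into a single ``phantom'' crossing that no chord of $\m G$ records, and then to show that this phantom is exactly what spoils one $X$-contour coloring. I would start from Proposition \ref{G=attach}: since $\m G$ is non-realizable but satisfies the even condition, it supplies two non-intersecting chords $\m a,\m b$, a virtual crossing $x$ on the loop $\mathscr C(b)$, and four distinct chords $\m c,\m d,\m e,\m f\in\m a_\times\cap\m b_\times$ lying on the strands $c\to x\to d$ and $e\to x\to f$. Choosing disjoint arcs $\alpha=\m a_0\m a_1$ and $\beta=\m b_0\m b_1$ exactly as in the proof of Proposition \ref{prop1}, the first thing I would record is a purely combinatorial normalization: a chord crossing both $\m a$ and $\m b$ must have one endpoint on $\alpha$ and one on $\beta$, so each of $\m c,\m d,\m e,\m f$ places one endpoint on each arc, whereas the two passages $x',x''$ of the virtual crossing both lie on $\beta$ and are joined by \emph{no} chord of $\m G$.

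Next I would build the $X$-contour. The two strands cross transversally at $x$, which should furnish an intersecting pair $\m p,\m q$ among $\m c,\m d,\m e,\m f$; I would take $X(\m p,\m q)$ and choose its two arcs so that the associated $\mathscr X$-contour $\mathscr X(p,q)$ is, up to the reroutings recorded by its door chords, the loop $\mathscr C(b)$ itself. The $X(\m p,\m q)$-coloring of Definition \ref{coloring} is then just the side-assignment of this loop, the running color flipping at each outside endpoint of a door chord. The heart of the argument is a parity comparison with the hypothetical ``repaired'' diagram in which $x$ is promoted to an honest crossing carrying a chord $\m x=(x',x'')$: in that picture $\mathscr C(b)$ is a Jordan-type loop and, exactly as was already observed for realizable diagrams, the coloring closes up with every chord non-colorful. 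The virtual crossing means precisely that this one incidence is absent, so as one travels along the loop the color fails to match across $x$; tracking the parity, some chord $\m s$ with both endpoints off the contour ends up with an odd number of door endpoints between its ends, i.e.\ its endpoints receive opposite colors. This $\m s$ is the colorful chord we want.

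The step I expect to be the main obstacle is making this parity comparison rigorous, i.e.\ proving that deleting the phantom chord yields a genuine colorful chord rather than merely a coloring that fails to close up around the whole circle. For this I would use the even condition twice: once, through Corollary \ref{strongeven}, to guarantee that the total number of door chords of $X(\m p,\m q)$ is even, so that the coloring is well defined away from $x$; and once more to ensure that every genuine incidence of the curve with the loop cancels in pairs, so that the only unmatched discrepancy is the one manufactured at $x$. A secondary technical point, which I would settle with Proposition \ref{RemConway}, is to normalize by Conway-smoothing the chords of $\m a_\parallel\cap\m b_\parallel\setminus\{\m a,\m b\}$ so that $\mathscr C(a)$ becomes a Jordan loop and the choice of arcs for $X(\m p,\m q)$ in Step~2 is unambiguous, while verifying that this smoothing neither disturbs $\m a_\times\cap\m b_\times$ nor destroys the colorful chord once it is located.
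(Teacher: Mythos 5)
Your overall target is the right one --- the paper's proof also ends with the observation that the coloring fails to flip at the unrecorded crossing $x$, so the crossing met immediately afterwards yields a colorful chord --- but the construction of the $X$-contour, which is where all the real work lies, has two genuine gaps. First, your claim that the transversal crossing at $x$ ``should furnish an intersecting pair $\m{p},\m{q}$ among $\m{c},\m{d},\m{e},\m{f}$'' is exactly what cannot be taken for granted: the crossing at $x$ is precisely the incidence that $\m{G}$ fails to record, so it imposes no intersection relation whatsoever on the chords of $\m{G}$. The paper's proof acknowledges this by splitting into two cases according to whether the two chords flanking $x_0$ intersect or not, and in the non-intersecting case it must build a different contour $X(\widetilde{\m{o}},\m{b})$, where $\widetilde{\m{o}}$ is an auxiliary chord found by walking along the circle (possibly $\m{o}$ itself), and then verify non-degeneracy by hand. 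Your proposal has no substitute for this case.

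Second, and more seriously, choosing the contour so that $\mathscr{X}(p,q)$ is ``up to reroutings the loop $\mathscr{C}(b)$ itself'' destroys the argument rather than enabling it: as you yourself note, \emph{both} passages $x',x''$ of the virtual crossing lie on $\beta$, so such a contour passes through $x$ twice. It is then not a Jordan curve (even in your ``repaired'' diagram, adding the chord $\m{x}$ does not make $\mathscr{C}(b)$ simple), $x$ is a self-intersection of the contour rather than a point where the remaining curve $\mathscr{P}$ crosses it, and the entire parity/coloring comparison --- which needs the Jordan curve theorem for the contour plus a single unrecorded transversal crossing of the contour by the rest of the curve --- has nothing to apply to. The decisive move in the paper, absent from your proposal, is to select an arc $\m{a}_1\m{b}_1$ containing exactly \emph{one} of the two labels $x_0,x_1$ and no other endpoints, which forces the resulting $\mathscr{X}$-contour to be a simple closed curve meeting $x$ exactly once as a ``virtual door.'' Finally, your normalization by Conway smoothing the chords of $\m{a}_\parallel\cap\m{b}_\parallel$ creates a transfer problem the statement does not allow: the proposition demands an $X$-contour and colorful chord in $\m{G}$ itself, and a colorful chord produced in a smoothed diagram $\widehat{\m{G}}_{\m{c}}$ does not automatically pull back, since smoothing reverses an arc and changes which chords intersect (Proposition \ref{RemConway}); the paper's Lemma \ref{prop2} transports colorful chords only in the opposite direction, and its own proof of Proposition \ref{propaboutX} never smooths anything.
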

\begin{proof}
By Theorem \ref{virt}, $\m{G}$ defines a virtual curve $\mathscr{C}$ (= the shadow of a virtual knot diagram) up to virtual moves. Starting from a crossing, say, $o$, let us walk along $\mathscr{C}$ till we meet the first virtual crossing, say, $x$. Denote this path by $\mathscr{P}$. Just for convinces, let us put the labels, say, $x_0,x_1$ of the virtual crossing $x$ on the circle of $\m{G}$ in the order we meet them on $\mathscr{P}$.

From Proposition \ref{G=attach} it follows that for a chord $\m{o}\in \m{G}$ we can find $n \ge 1$ chords $\m{o}^1,\ldots,\m{o}^n$ such that, for every $1\le i \le n$, we have: (1) the chords $\m{o}, \m{o}^i$ do not intersect, (2) the loop $\mathscr{C}(o)$ contains the following paths $a^i \to x \to b^i$, $c^i \to x \to d^i$ where $\m{a}^i,\m{b}^i,\m{c}^i,\m{d}^i \in \m{o}_\times \cap \m{o}^i_\times$ are different chords. Thus, for some $1\le i,j \le n$ we have an arc, say, $\m{a}^i_1\m{b}^j_1$ contains only one of $x_0,x_1$, no endpoints of another chords, and $\m{a}^i,\m{b}^j \in \m{o}_\times$. Denote this arc by $\m{a_1b_1}$ and assume that $x_0$ lies on $\m{a_1b_1}$. We have to consider the following two cases.

(1) The chords $\m{a,b}$ do not intersect. Let us walk along the circle of $\m{G}$ in the direction $\m{b_1} \to x_0 \to \m{a}_1$ ({\sc Figure} \ref{last}) till we meet the first chord, say, $\widetilde{\m{o}}$ such that $\m{a,b,o} \in \widetilde{\m{o}}_\times$. If we cannot find such chord we then choose the chord $\m{o}$, \textit{i.e.,} $\widetilde{\m{o}}:=\m{o}$. Take the $X$-contour $X(\widetilde{\m{o}},\m{b})$ contains the arc $\m{a_1b_1}$.  By the direct verification one can easy check that at least one of $\m{c,d}$ has to cross the both of $\m{a,b}$, \textit{i.e.,} this $X$-contour is non-degenerate.

\begin{figure}[h!]
\begin{center}
\begin{tikzpicture}[line width =2,scale = 0.8]
  \draw[line width =2.5] (0,0) circle (2);
\begin{scope}[rotate =35]
   \draw[orange, line width=4] (2,0) arc(0:25:2);
   \end{scope}
\begin{scope}[rotate =200]
   \draw[orange, line width=4] (2,0) arc(0:60:2);
   \end{scope}
 %chords
   \draw[cyan] (20:1.5) -- (340:2);
   \fill(340:2) circle(3pt) node[right] {$\m{d}_1$};
   \draw[teal]  (30:1) -- (300:2);
   \fill(320:2) circle(3pt) node[below] {$x_1$};
   \fill(300:2) circle(3pt) node[below] {$\m{c}_1$};
   \draw  (10:2) -- (180:2);
   \fill(10:2) circle(3pt) node[right] {$\m{o}_0$};
   \fill(180:2) circle(3pt) node[left] {$\m{o}_1$};
\draw [line width = 3, orange] (35:2) -- (200:2);
   \fill(35:2) circle(3pt) node[right] {$\widetilde{\m{o}}_0$};
   \fill(200:2) circle(3pt) node[left] {$\widetilde{\m{o}}_1$};
\draw[line width =3, orange]  (60:2) -- (260:2);
   \fill(60:2) circle(3pt) node[above right] {$\m{b}_0$};
   \fill(260:2) circle(3pt) node[below] {$\m{b}_1$};
\draw[green]  (170:1.3) -- (220:2);
   \fill(220:2) circle(3pt) node[below] {$\m{a}_1$};
  \fill (240:2) circle(3pt) node[below] {$x_0$};
 \end{tikzpicture}
\end{center}
\caption{The ``general'' position of chords $\m{a,b,c,d}$ is shown. Our walk along the path $\mathscr{P}$ correspondences to $\m{o_0 \to \widetilde{o}_0 \to b_0\to \cdots \to \m{d_1}}$.}\label{last}
\end{figure}

(2) The chords $\m{a,b}$ intersect. Take the $X$-contour $X(\m{a,b})$ which contains the arc $\m{a_1b_1}$. It is easy to see that this $X$-contour has at least one real door chord, because its another arc $\m{a_0b_0}$ has no virtual crossing thus it has to have at least one real crossing (see {\sc Figure \ref{degen1}}).
\begin{figure}[h!]
  \begin{tikzpicture}[scale = 0.8]
  \draw[line width=2,name path =a] (-0.5,-1) to [out = 70, in = 180] (2,1.2) to [out = 0, in = 120] (4,-1);
  \draw[line width =2,name path =b] (-0.5, 0) to [out = 330, in = 200](1,-1);
  \draw[line width =2] (1,-1) to [out = 20, in =180] (2,0) to [out = 0, in = 90] (2.5,-1) to [out = 270, in = 300] (1,-1.5) to [out = 120, in = 150] (1.2,0) to [out = 330, in = 180] (2.2,-1) to [out = 0, in = 160] (3.5,-0.5);
  \draw[line width =2,name path =c] (3.5, -0.5) to [out = 340, in = 180](4.5,0);
  \draw[line width =2, name path =d, dotted] (1.5,-2.2) to (2,2.5);
  %intersections
  \draw [name intersections = {of =d and a, by ={X}}]
 (X) circle (5pt) node [above left] {$x$};
  \fill [name intersections = {of =a and b, by ={B}}]
 (B) circle (3pt) node [left] {$a$};
 \fill [name intersections = {of =a and c, by ={C}}]
 (C) circle (3pt) node [right] {$b$};
\end{tikzpicture}
\caption{Since the curve $\mathscr{C}$ is determined up to virtual moves then the dotted line has to cross the button line.}\label{degen1}
\end{figure}
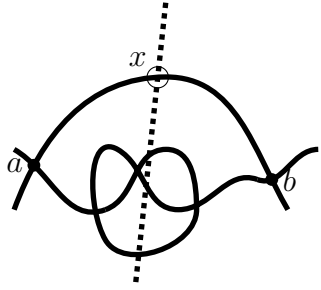

So, we have a non-degenerate $\mathscr{X}$-contour of the curve $\mathscr{C}$ such that $x$ is its virtual door. By the construction, this $\mathscr{X}$-contour is the closed curve without self-intersections. Hence, by Jordan curve Theorem, it divides $\mathscr{P}$ into two parts. Let us color these parts into two different colors. If the crossing $x$ would be a real door of this $\mathscr{X}$-contour, then by Definition \ref{coloring}, we can take such $\mathscr{X}$-contour coloring of $\mathscr{P}$ as before. But since $x$ is not real door then after meeting it at the second time we do not change the color and thus the next crossing is the intersection point of two lines which have different colors, \textit{i.e.,} the corresponding chord is colorful. This completes the proof.
\end{proof}

\begin{lemma}\label{prop1}
  Let $\m{G}$ be a Gauss diagram. Consider a $C(\m{a})$-contour coloring of $\m{G}$ for some chord $\m{a\in G}$. If there exists a colorful chord for the $C$-contour $C(\m{a})$ then the diagram $\m{G}$ does not satisfy the even condition.
\end{lemma}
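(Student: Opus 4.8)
The plan is to follow the color as it is carried around the circle of $\m{G}$ and to read off the parity of the number of color changes between the two endpoints of a colorful chord as a crossing count governed by the Even Condition. The first step is to determine which chords can be colorful at all. By Definition~\ref{coloring} the arcs lying on the chosen arc $\m{a}_0\m{a}_1$ of the $C$-contour are never colored, so an endpoint situated there carries no color; hence a colorful chord must have \emph{both} endpoints on the complementary arc, i.e. on the arc cut off by $\m{a}_0,\m{a}_1$ that is not $\m{a}_0\m{a}_1$. Writing $\m{e}$ for such a chord, both of its endpoints lie on one side of $\m{a}$, so $\m{e}$ does not cross $\m{a}$; that is, $\m{a}$ and $\m{e}$ are non-intersecting. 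This is precisely the point at which part~(1) of the Even Condition becomes available.

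Next I would locate the color changes. The coloring flips color exactly at endpoints of door chords, and the door chords of $C(\m{a})$ are precisely the members of $\m{a}_\times$. Each such chord has one endpoint on $\m{a}_0\m{a}_1$ and one endpoint, say $f^{\mathrm{out}}$, on the complementary arc. For the chord $\m{e}$, whose two endpoints both lie on the complementary arc, a door chord $\m{f}$ crosses $\m{e}$ if and only if $f^{\mathrm{out}}$ lies on the subarc between $\m{e}_0$ and $\m{e}_1$ inside the complementary arc; its other endpoint sits on $\m{a}_0\m{a}_1$ and therefore cannot lie on that subarc, so exactly one endpoint of $\m{f}$ falls between $\m{e}_0$ and $\m{e}_1$. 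This gives a bijection between the flip points met on the subarc from $\m{e}_0$ to $\m{e}_1$ and the chords of $\m{a}_\times\cap\m{e}_\times$, whence the number of color changes along that subarc equals $|\m{a}_\times\cap\m{e}_\times|$.

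The conclusion is then immediate: $\m{e}$ is colorful, i.e. its endpoints receive different colors, precisely when the number of flips between them is odd, hence precisely when $|\m{a}_\times\cap\m{e}_\times|$ is odd. Since $\m{a}$ and $\m{e}$ are non-intersecting, an odd value contradicts the Even Condition (Corollary~\ref{strongeven}), and therefore $\m{G}$ fails to satisfy it, as claimed.

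I expect the main obstacle to be making the flip count airtight, in particular the bijection above and the independence of the parity both from the walking direction and from the arbitrary starting color. The direction-independence deserves a line: traversing from $\m{e}_0$ to $\m{e}_1$ the other way, through $\m{a}_0\m{a}_1$, one meets all $|\m{a}_\times|$ endpoints lying on $\m{a}_0\m{a}_1$ together with the remaining $|\m{a}_\times|-|\m{a}_\times\cap\m{e}_\times|$ outer door endpoints, i.e. $2|\m{a}_\times|-|\m{a}_\times\cap\m{e}_\times|$ flips in all, which has the same parity; the same count applied to a full loop of the circle shows the coloring is well defined, since the total number of flips is the even number $2|\m{a}_\times|$.
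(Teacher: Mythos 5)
Your proof is correct and follows essentially the same route as the paper's: you show a colorful chord $\m{e}$ cannot touch the contour arc, hence does not cross $\m{a}$, and that colorfulness forces $\m{e}$ to cross an odd number of door chords, i.e. $|\m{a}_\times\cap\m{e}_\times|\equiv 1 \bmod 2$, violating part (1) of the even condition. The only difference is that you make explicit the flip-counting bijection, the direction-independence of the parity, and the well-definedness of the coloring, all of which the paper asserts in a single sentence.
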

\begin{proof}
Indeed, let $\m{b}$ be a colorful chord for the $C$-contour $C(\m{a})$. First note that, the chord $\m{b}$ cannot cross $\m{a}$ because otherwise $\m{b}$ should be a door chord of the $C$-contour $C(\m{a})$. Next, if the chord $\m{b}$ is colorful then it crosses an odd number of door chords of $C(\m{a})$. Hence $|\m{a}_\times \cap \m{b}_\times| \equiv 1 \bmod 2$, as claimed.
\end{proof}

\begin{figure}[h!]
  \begin{tikzpicture}[scale=0.8]
  \draw [line width = 2,name path =v1, olive] (-1,0) to [out = 30, in = 180] (1,-0.2);
  \draw[line width =2, name path= o1,orange] (1,-0.2) to [out =0, in = 200] (4,0.2);
  \draw [line width = 2, name path= a, olive] (4,0.2) to [out =20 , in = 270] (5,1.5);
  \draw [line width = 2, name path= b, olive] (5,1.5) to [out = 90, in = 70] (0,1);
  \draw [line width = 2, name path =c,olive] (0,1) to [out = 250, in = 180] (0.3,-1);
  \draw [line width = 2, olive] (0.3,-1) to [out = 0, in = 250] (1,-0.2);
  \draw [line width = 2, name path= o2, orange] (1,-0.2) to [out = 70, in=180] (2.5,1.5);
  \draw [line width = 2, name path= o3, orange] (2.5,1.5) to [out = 0, in = 120] (4,0.2);
  \draw [line width =2, name path = ol2, olive] (4,0.2) to [out = 300, in = 0] (3,-1);
  %helped lines
  \draw [name path= w, white] (3,-1) to [out = 180, in = 240] (2.5,0.3);
  %intersections
  \fill [name intersections={of=o1 and w, by={T4}}]
 (T4) circle (1pt);
 %h2
 \draw [name path= h2, white] (T4) to (3.2,2);
  \fill [name intersections = {of =o3 and h2, by ={T2}}]
 (T2) circle (1pt);
 %h3
 \draw [name path= h3, white] (T4) to (4,1);
  \fill [name intersections = {of =o3 and h3, by ={T3}}]
 (T3) circle (1pt);
 %h4
 \draw [name path= h4, white] (T4) to (1,1.5);
  \fill [name intersections = {of =o2 and h4, by ={T1}}]
 (T1) circle (1pt);
%zzzzz
\draw [line width =2, olive] (3,-1) to [out = 180, in = 260] (T4);
\draw [line width =2,name path=y1, cyan] (T4) to [out = 80, in = 220] (T2);
\draw [line width =2, name path = d, cyan] (T2) to [out = 40, in = 300] (5,3);
\draw [line width =2, name path = d1, cyan] (5,3) to [out = 120, in = 140] (0,0.5);
\draw [line width =2,name path=v2, cyan] (0,0.5) to [out = 320, in = 150] (T1);
\draw [line width =2, name path = y2, olive] (T1) to [out = 330, in = 200] (T3);
\draw [line width =2, name path = d2, olive] (T3) to [out = 20, in = 90] (6,0);
\draw [line width =2, olive] (6,0) to [out = 270, in = 0] (2,-2);
\draw [line width =2, olive] (2,-2) to [out = 180, in = 270] (-1.3,-0.5);
\draw [line width =2, olive] (-1.3, -0.5) to [out = 90, in = 210] (-1,0);
%intersections
\fill (T1) circle (3pt) node [above] {$7$};
\draw (T2) circle (5pt) node [above] {$x$};
\draw (T3) circle (5pt) node [above] {$y$};
\fill (T4) circle (3pt) node [below right] {$2$};
%yellow point
 \draw [name intersections = {of =y1 and y2, by ={I}}]
 (I) circle (5pt) node [above left] {$z$};
%orange
  \fill (1,-0.2) circle (3pt) node [below right] {$1$};
%circle
    \fill[white] (4,0.2) circle (7pt);
    \draw[name path = circ, white] (4,0.2) circle (7pt);
%=======================================================================
\draw [name intersections = {of= circ and o1, by ={3ul}}]
 (3ul) node {};
\draw [name intersections = {of= circ and o3, by ={3ur}}]
 (3ur) node {};
\draw[orange,line width =2] (3ul) to [out = 17, in = 300] (3ur);
%~~~~~~~~~~~~~~~~~~~~~~~~~~~~~~~~~~~~~~~~~~~~~~~~~~~~~~~~~~~~~~~~~~~~~~~~
\draw [name intersections = {of= circ and a, by ={3dr}}]
 (3dr) node {};
\draw [name intersections = {of= circ and ol2, by ={3dl}}]
 (3dl) node {};
\draw[olive,line width =2] (3dl) to [out = 105, in =200] (3dr);
%========================================================================
%violet
 \fill [name intersections = {of =v1 and c, by ={a1}}]
 (a1) circle (4pt) node [below left] {$0$};
 \fill [name intersections = {of =a and d2, by ={a2}}]
 (a2) circle (4pt) node [above right] {$4$};
  \fill [name intersections = {of =b and d, by ={a3}}]
 (a3) circle (4pt) node [above] {$5$};
  \fill [name intersections = {of =d1 and c, by ={a4}}]
 (a4) circle (4pt) node [above right] {$6$};
 \begin{scope}[xshift=10cm, yshift=1cm]
   \draw[line width =2] (0,0) circle (3);
   \draw[line width =5,orange] (112:3) --(200:3);
   \draw[line width =1.5,olive] (90:3) -- (222:3);
   \draw[line width=3,green] (134:3) -- (330:3);
   \draw[line width=2,teal] (260:3) --(355:3);
   \draw[line width=2,teal] (240:3) -- (15:3);
   \draw[line width =1.5,olive] (285:3) --(65:3);
   \draw[line width=3,green] (180:3)--(40:3);
%=====================================================
%arcs
  \begin{scope}[rotate =112]
   \draw[orange, line width=5] (3,0) arc(0:88:3);
   \end{scope}
  \begin{scope}[rotate =200]
   \draw[olive, line width=5] (3,0) arc(0:130:3);
   \end{scope}
  \begin{scope}[rotate =330]
   \draw[cyan, line width=5] (3,0) arc(0:70:3);
   \end{scope}
  \begin{scope}[rotate =40]
   \draw[olive, line width=5] (3,0) arc(0:72:3);
   \end{scope}
%====================================================
    \fill(90:3) circle(3pt) node[above] {$0$};
    \fill(112:3) circle(3pt) node[above left] {$1$};
    \fill(134:3) circle(3pt) node[above left] {$2$};
    \fill(180:3) circle(3pt) node[left] {$7$};
    \fill(200:3) circle(3pt) node[below left] {$1$};
    \fill(222:3) circle(3pt) node[below left] {$0$};
    \fill(240:3) circle(3pt) node[below] {$6$};
    \fill(260:3) circle(3pt) node[below] {$5$};
    \fill(285:3) circle(3pt) node[below] {$4$};
    \fill(330:3) circle(3pt) node[below right] {$2$};
    \fill(355:3) circle(3pt) node[right] {$5$};
    \fill(15:3) circle(3pt) node[right] {$6$};
    \fill(40:3) circle(3pt) node[above right] {$7$};
    \fill(65:3) circle(3pt) node[above] {$4$};
 \end{scope}
\end{tikzpicture}
\caption{The Gauss diagram does not satisfy the even condition; both the chords $1$, $6$ are crossed by only one chord $2$.}\label{del2}
\end{figure}
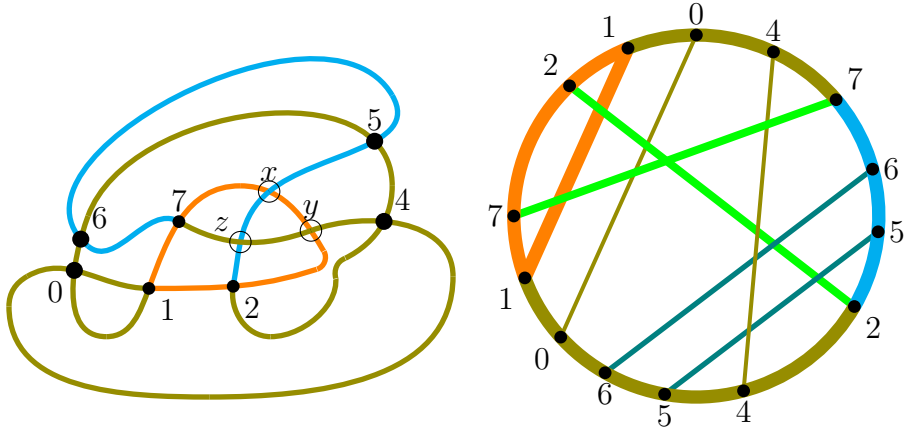

\begin{example}
  Let us consider the Gauss diagram which is shown in {\sc Figure} \ref{del2}. We have the orange $C$-contour $C(1)$ and the $C(1)$-contour coloring of the Gauss diagram and the corresponding (virtual) curve. We see that there are two chords (namely $5$ and $6$) which are colorful and $5_\times \cap 1_\times = 6_\times \cap 1_\times = \{2\}$.
\end{example}

\begin{lemma}\label{prop2}
  Let $\m{G}$ be a Gauss diagram and $\m{a}, \m{b} \in \m{G}$ be its intersecting chords. Suppose that there exists a colorful chord $\m{c}$ for an $X$-contour $X(\m{a},\m{b})$. Then there exists a $C$-contour of the Gauss diagram $\widehat{\m{G}}_\m{b}$ (= Conway's smoothing the chord $\m{b}$) such that the chord $\m{c}$ is colorful for this $C$-contour in $\widehat{\m{G}}_\m{b}$.
\end{lemma}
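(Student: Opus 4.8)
The plan is to realize the Conway smoothing of $\mathfrak{b}$ as an operation that glues the two arcs of the $X$-contour $X(\mathfrak{a},\mathfrak{b})$ into the single arc of a $C$-contour $C(\mathfrak{a})$, and then to check that this gluing carries the $X(\mathfrak{a},\mathfrak{b})$-coloring to the $C(\mathfrak{a})$-coloring. First I would record the cyclic positions of the four endpoints of $\mathfrak{a},\mathfrak{b}$: since $\mathfrak{a},\mathfrak{b}$ intersect they occur in the alternating order $\mathfrak{a}_0,\mathfrak{b}_0,\mathfrak{a}_1,\mathfrak{b}_1$, cutting the circle into four arcs which I denote $A=\mathfrak{a}_0\mathfrak{b}_0$, $B=\mathfrak{b}_0\mathfrak{a}_1$, $C=\mathfrak{a}_1\mathfrak{b}_1$, $D=\mathfrak{b}_1\mathfrak{a}_0$, so that $X(\mathfrak{a},\mathfrak{b})$ is built from $A$ and $C$ and its complementary (coloured) region is $B\cup D$. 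Writing the double-occurrence word as $W=W_1\mathfrak{b}W_2\mathfrak{b}W_3$ with $W_2=B\,\mathfrak{a}_1\,C$, Proposition \ref{RemConway} produces $\widehat{\m{G}}_{\m{b}}$ from the word $W_1W_2^{R}W_3$; hence the reversal turns the cyclic order into $\mathfrak{a}_0,A,C^{R},\mathfrak{a}_1,B^{R},D$, so that the two $X$-contour arcs $A$ and $C$ become the single arc $A\,C^{R}$ joining $\mathfrak{a}_0$ to $\mathfrak{a}_1$.

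Next I would take $C(\mathfrak{a})$ to be the $C$-contour whose chosen arc is exactly $A\,C^{R}$. The first thing to verify is that the door chords match: a chord is a door of $C(\mathfrak{a})$ iff it crosses $\mathfrak{a}$ in $\widehat{\m{G}}_{\m{b}}$, i.e.\ iff it has exactly one endpoint on $A\,C^{R}$; since a block reversal preserves the underlying set of endpoint positions, this is the same as having exactly one endpoint on $A\cup C$, which is precisely the door condition for $X(\mathfrak{a},\mathfrak{b})$. Thus $X(\mathfrak{a},\mathfrak{b})$ and $C(\mathfrak{a})$ have the same door chords and the same complementary region $B\cup D$ (as a set of positions), the only change being the internal reversals $B\mapsto B^{R}$, $C\mapsto C^{R}$ and the disappearance of the two marks $\mathfrak{b}_0,\mathfrak{b}_1$.

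I would then compare the two colourings on $B\cup D$. Using the reformulation (implicit in the definition of the coloring and in the earlier $C$-contour lemma) that two points of the coloured region receive different colours exactly when an odd number of door–endpoints lies between them, the claim reduces to checking that the parity of the number of door endpoints separating the two endpoints of $\mathfrak{c}$ is unchanged by the passage from $\m{G}$ to $\widehat{\m{G}}_{\m{b}}$. When both endpoints of $\mathfrak{c}$ lie in the same arc ($B$ or $D$) this is immediate, because a reversal of a block fixes the set of door endpoints strictly between two of its points; here one also uses that $\mathfrak{c}$ does not cross $\mathfrak{b}$, so $\mathfrak{c}$ keeps its position under Proposition \ref{RemConway}.

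The hard part will be the remaining case, in which the two endpoints of $\mathfrak{c}$ lie on different arcs $B$ and $D$, equivalently $\mathfrak{c}\in\mathfrak{b}_\times$. Here the reversal carries $C^{R}$ across $\mathfrak{a}_1$ and the path between the endpoints of $\mathfrak{c}$ now sweeps a different set of door endpoints, while at the same time Proposition \ref{RemConway} toggles the crossings of $\mathfrak{c}$ with $\mathfrak{a}$ and with each door chord lying in $\mathfrak{b}_\times$, and deletes the chord $\mathfrak{b}$ itself (which was a common crossing of $\mathfrak{a}$ and $\mathfrak{c}$). I expect the proof to hinge on showing that these contributions cancel modulo $2$: the well-definedness of the colouring forces the number of door endpoints on $B\cup D$ to be even, and combining this constraint with the toggling-and-deletion bookkeeping of Proposition \ref{RemConway} should restore the correct parity, so that $\mathfrak{c}$ is again colourful for $C(\mathfrak{a})$. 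This parity cancellation in the split case is the step I would expect to be the crux and to demand the most careful accounting.
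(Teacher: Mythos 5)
Your overall strategy is the same as the paper's own proof (smooth $\m{b}$, pass to the $C$-contour $C(\m{a})$ whose chosen arc is $A\,C^{R}$, observe that its door chords coincide with those of $X(\m{a},\m{b})$, then compare the two colorings), and your treatment of the case where both endpoints of $\m{c}$ lie on one of the arcs $B$, $D$ is correct. But the split case $\m{c}\in\m{b}_\times$, which you yourself call the crux, is not proved in your proposal --- you only express the expectation that the parities cancel --- and the cancellation you hope for is in fact false. Quantitatively: with the coloring convention used in the paper's figures (the color is carried \emph{unchanged} across the two uncolored arcs of the $X$-contour; this is how Figures \ref{X-cont} and \ref{d} are colored), $\m{c}$ is colorful in $\m{G}$ iff the number of door-chord endpoints on the piece of $B$ from $\m{c}$'s endpoint to $\m{a}_1$, plus the number on the piece of $D$ from $\m{b}_1$ to $\m{c}$'s other endpoint, is odd; after smoothing, because $B$ is reversed while $D$ is not, colorfulness for $C(\m{a})$ in $\widehat{\m{G}}_{\m{b}}$ is governed by the \emph{complementary} piece of $B$ (from $\m{b}_0$ to $\m{c}$'s endpoint) together with the same piece of $D$. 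The two parities differ by the total number of door endpoints lying on $B$. Well-definedness of the coloring forces only the sum of the counts on $B$ and on $D$ to be even, i.e. the two counts have equal parity; it does not make the count on $B$ even, which is exactly what your case needs.

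Moreover, this is not a hole that more careful bookkeeping can fill, because the statement fails as you (and the paper) have set it up. Take $\m{G}$ with cyclic word $\m{a}\,\m{d}\,\m{b}\,\m{d}\,\m{c}\,\m{a}\,\m{e}\,\m{b}\,\m{e}\,\m{c}$ and the $X$-contour $X(\m{a},\m{b})$ whose two arcs carry $\m{d}_0$ and $\m{e}_0$: its door chords are $\m{d}$ and $\m{e}$, one door endpoint lies on $B$ and one on $D$, and $\m{c}$ is colorful (it crosses no door chord, but exactly one door endpoint separates its endpoints in the colored region). In $\widehat{\m{G}}_{\m{b}}$, whose word is $\m{a}\,\m{d}\,\m{e}\,\m{a}\,\m{c}\,\m{d}\,\m{e}\,\m{c}$, Proposition \ref{RemConway} toggles the crossings of $\m{c}$ with $\m{d}$ and with $\m{e}$ (all three cross $\m{b}$), so $\m{c}$ now crosses \emph{both} door chords of $C(\m{a})$ --- an even number --- and one checks directly that $\m{c}$ is colorful for no $C$-contour of $\widehat{\m{G}}_{\m{b}}$ whatsoever: it is a door chord of $C(\m{d})$ and of $C(\m{e})$, it is contained in one contour on $\m{a}$ and crosses evenly many doors of the other, and it is itself the contour chord of $C(\m{c})$. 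The same toggling contradicts the paper's own proof, which asserts that after smoothing $\m{c}$ ``intersects the same door chords as in $\m{G}$''; that assertion already fails in the paper's Figure \ref{d}, where chord $5$ and door chord $7$ both cross chord $3$. So your instinct correctly locates the weak point, and more honestly than the paper's write-up, but the missing step is not a computation waiting to be done: in this generality, and with this coloring convention, the claimed statement is wrong, and any repair must either add hypotheses or resolve the ambiguity in Definition \ref{coloring} about how the color propagates across the $X$-contour arcs --- precisely the issue your split case exposes.
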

\begin{proof}
Indeed, consider the Gauss diagram $\widehat{\m{G}}_\m{b}$. From Proposition \ref{RemConway} it follows that after Conway's smoothing the chord $\m{b}$, the chord $\m{c}$ does not intersect $\m{a}$ and intersects the same door chords of the $X$-contour $X(\m{a},\m{b})$ as in $\m{G}$. Further, let us consider the $C$-contour $C(\m{a})$ in $\widehat{\m{G}}_\m{b}$ such that it does not contain the chord $\m{c}$. By Proposition \ref{RemConway}, the chord $\m{a}$ crosses in $\widehat{\m{G}}_\m{b}$ only the chord that are door chords of the $X$-contour $X(\m{a},\m{b})$. Hence, by Definition \ref{coloring}, we may take the $C(\m{a})$-contour coloring of $\widehat{\m{G}}_\m{b}$ such that $\m{c}$ is the colorful chord for this $C$-contour.
 \end{proof}

\begin{proposition}\label{gen}
   Let a Gauss diagram $\mathfrak{G}$ satisfy the even condition. $\m{G}$ is realizable if and only if for every chord $\m{c} \in \m{G}$, $\widehat{\m{G}}_\m{c}$ satisfies the even condition
\end{proposition}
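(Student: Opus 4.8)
The plan is to prove the two implications separately, handling the forward (``only if'') direction as a short consequence of the fact that Conway's smoothing preserves realizability, and devoting the main effort to the reverse direction, which I would establish in contrapositive form by chaining together the three preceding results.

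For the forward direction, suppose $\m{G}$ is realizable, say by a plane curve $\mathscr{C}$, and fix an arbitrary chord $\m{c} \in \m{G}$ corresponding to a crossing $c$. The discussion preceding Proposition~\ref{RemConway} shows that the smoothed curve $\widehat{\mathscr{C}}_c$ is again the shadow of a knot, and that its Gauss diagram is exactly $\widehat{\m{G}}_\m{c}$. In particular, since the word $\widehat{W}_c = W_1 W_2^R W_3$ is a single double-occurrence word, $\widehat{\mathscr{C}}_c$ is a genuine connected plane curve $S^1 \to \mathbb{R}^2$, so $\widehat{\m{G}}_\m{c}$ is realizable. The Even Condition (Corollary~\ref{strongeven}) then forces $\widehat{\m{G}}_\m{c}$ to satisfy the even condition, and as $\m{c}$ was arbitrary the forward implication follows.

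For the reverse direction I would argue the contrapositive: assuming $\m{G}$ satisfies the even condition but is \emph{non-realizable}, I must produce a single chord whose smoothing violates the even condition. By Proposition~\ref{propaboutX} there is an $X$-contour $X(\m{a},\m{b})$ (so the chords $\m{a}$ and $\m{b}$ intersect, by the definition of an $X$-contour) together with a chord, say $\m{e}$, that is colorful for it. Applying Lemma~\ref{prop2}, with $\m{e}$ in the role of the colorful chord, Conway's smoothing of $\m{b}$ produces a Gauss diagram $\widehat{\m{G}}_\m{b}$ in which $\m{e}$ is colorful for a suitable $C$-contour $C(\m{a})$. Finally the lemma asserting that a colorful chord for a $C$-contour forces a violation of the even condition (Lemma~\ref{prop1}) applies to $\widehat{\m{G}}_\m{b}$, so $\widehat{\m{G}}_\m{b}$ fails the even condition. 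Taking $\m{c} := \m{b}$ thus exhibits a chord whose smoothing is not even, which is precisely the contrapositive of the reverse implication.

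The two ends of the argument become immediate once the earlier results are in hand, so I expect no genuine obstacle; the only points requiring care are bookkeeping ones. In the forward direction I must confirm that the smoothing keeps the shadow a single plane curve rather than splitting it into a link, which is visible from the single-word description $\widehat{W}_c = W_1 W_2^R W_3$ in Proposition~\ref{RemConway}, so that Corollary~\ref{strongeven} (stated for curves $S^1\to\mathbb{R}^2$) genuinely applies. In the reverse direction the delicate step is matching notation across the three results: the colorful chord $\m{e}$ furnished by Proposition~\ref{propaboutX} must be the same chord fed into Lemma~\ref{prop2}, and the chord whose smoothing I invoke must be one of the two chords defining the $X$-contour, namely $\m{b}$, and not the colorful chord $\m{e}$ itself. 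Verifying that the hypothesis of Lemma~\ref{prop2} (intersecting chords $\m{a},\m{b}$) holds is automatic, since an $X$-contour is defined only for a pair of intersecting chords.
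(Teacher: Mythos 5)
Your proposal is correct and follows essentially the same route as the paper: the reverse direction is proved in contrapositive form by exactly the paper's chain (Proposition~\ref{propaboutX} produces a colorful chord for an $X$-contour $X(\m{a},\m{b})$, Lemma~\ref{prop2} transfers it to a $C$-contour of $\widehat{\m{G}}_\m{b}$, and the lemma on colorful chords for $C$-contours shows $\widehat{\m{G}}_\m{b}$ violates the even condition). The only difference is that you spell out the forward direction (smoothing preserves realizability, hence Corollary~\ref{strongeven} applies), which the paper leaves implicit in its discussion of Conway smoothing; that is a small but welcome completion rather than a different approach.
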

\begin{proof}
Indeed, let $\mathfrak{G}$ be a non-realizable Gauss diagram and let $\m{G}$ satisfy the even condition. By Proposition \ref{propaboutX}, there exists a colorful chord (say $\m{c}$) for a $X$-counter $X(\m{a},\m{b})$ of $\m{G}$. By Lemma \ref{prop2}, the chord $\m{c}$ is the colorful chord in $\widehat{\m{G}}_\m{b}$. Hence from Lemma \ref{prop1} it follows that $\widehat{\m{G}}_\m{b}$ does not satisfy the even condition, and the statement follows.
\end{proof}

We can summarize our results in the following theorem.

\begin{theorem}
 A Gauss diagram $\m{G}$ is realizable if and only if the following conditions hold:
  \begin{itemize}
    \item[(1)] the number of all chords that cross a both of non-intersecting chords and every chord is even (including zero),
    \item[(2)] for every chord $\m{c} \in \m{G}$ the Gauss diagram $\widehat{\m{G}}_\m{c}$ (= Conway's smoothing the chord $\m{c}$) also satisfies the above condition.
  \end{itemize}
\end{theorem}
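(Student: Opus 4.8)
The plan is to read this statement as the packaging of two results already in hand: the Even Condition (Corollary~\ref{strongeven}) and Proposition~\ref{gen}. Condition~(1) is precisely the even condition, and condition~(2) asserts that every Conway smoothing $\widehat{\m{G}}_\m{c}$ again satisfies it. So the whole theorem should follow by combining these two ingredients, and I would prove the two implications separately.

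For the forward direction, suppose $\m{G}$ is realizable. Then Corollary~\ref{strongeven} gives condition~(1) at once. For condition~(2) I would invoke the observation recorded in the discussion preceding Proposition~\ref{RemConway}: if $\mathscr{C}$ is the shadow of a genuine knot and $c$ is one of its crossings, then the smoothed curve $\widehat{\mathscr{C}}_c$ is again an honest plane curve. At the level of words this is explicit, since $W = W_1\m{c}W_2\m{c}W_3$ passes to the double occurrence word $\widehat{W}_c = W_1 W_2^R W_3$, which is realized by a single closed curve. Hence $\widehat{\m{G}}_\m{c}$ is itself realizable, and a second application of Corollary~\ref{strongeven} shows it satisfies the even condition. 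As $\m{c}$ was arbitrary, condition~(2) holds.

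For the backward direction, assume (1) and (2). Condition~(1) is exactly the hypothesis ``$\m{G}$ satisfies the even condition'' of Proposition~\ref{gen}, and condition~(2) is exactly the right-hand side of the equivalence stated there, namely that for every chord $\m{c}\in\m{G}$ the diagram $\widehat{\m{G}}_\m{c}$ satisfies the even condition. Proposition~\ref{gen} therefore yields that $\m{G}$ is realizable, completing this direction. Equivalently, one may argue by contradiction: if $\m{G}$ were non-realizable while satisfying~(1), Proposition~\ref{gen} would force some $\widehat{\m{G}}_\m{c}$ to violate the even condition, contradicting~(2).

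The substantive content has all been discharged earlier --- most of it inside Proposition~\ref{propaboutX}, where the Jordan curve theorem is used to produce a colorful chord for a non-degenerate $X$-contour, and in the two subsequent lemmas, which transfer such a chord to a $C$-contour of $\widehat{\m{G}}_\m{b}$ (Lemma~\ref{prop2}) and detect the ensuing parity failure. Consequently I expect no real obstacle in assembling the final theorem; the only point that deserves explicit care is the claim used in the forward direction that realizability is preserved under Conway smoothing. This is not quoted as a separate lemma, so I would justify it directly from the word-level description $\widehat{W}_c = W_1 W_2^R W_3$ of Proposition~\ref{RemConway}, which exhibits $\widehat{\m{G}}_\m{c}$ as the Gauss diagram of an actual plane curve rather than merely a virtual one, so that Corollary~\ref{strongeven} indeed applies to it.
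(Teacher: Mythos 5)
Your proposal is correct and takes essentially the same route as the paper: the theorem is stated there as a summary of results already proved, with the backward direction being exactly Proposition~\ref{gen} and the forward direction being Corollary~\ref{strongeven} applied to $\m{G}$ and to each $\widehat{\m{G}}_\m{c}$. Your one addition --- explicitly justifying that Conway smoothing preserves realizability via the word description $\widehat{W}_c = W_1 W_2^R W_3$ --- is a point the paper leaves implicit in its construction of $\widehat{\mathscr{C}}_c$ before Proposition~\ref{RemConway}, and spelling it out is a genuine (if small) improvement in rigor, not a departure in method.
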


\newpage

\end{document}